\newtheorem{theorem}{Theorem}[section]
\newtheorem{definition}[theorem]{Definition}
\newtheorem{lemma}[theorem]{Lemma}
\newtheorem{remark}[theorem]{Remark}
\newtheorem{proposition}[theorem]{Proposition}
\def\supp{\mathrm{supp}}
\def\to{\rightarrow}
\def\C{\mathbb C}
\def\R{\mathbb R}
\def\N{\mathbb N}
\def\al{\alpha}
\def\be{\beta}
\def\de{\delta}
\def\rh{\rho}
\def\et{\eta}
\def\ga{\gamma}
\def\GA{\Gamma}
\def\ve{\varepsilon}
\def\la{\lambda}
\def\om{\omega} 
\def\va{\varphi}
\def\ta{\tau}
\def\g{\mathfrak{g}}
\def\h{\mathfrak{h}}
\def\p{\mathfrak{p}}
\def\ga{\gamma}
\def\Ga{\Gamma}
\def\la{\lambda}
\def\ve{\varepsilon}
\def\si{\sigma}
\def\om{\omega}
\def\ph{\phi}
\def\ch{\chi}
\def\ta{\tau}
\def\ps{\psi}
\def\C{\mathbb{C}}
\def\Ad{\rm{\, Ad \,}}
\def\Om{\Omega}
\def\ol#1{\overline{#1}}
\def\nn{\nonumber}
\def\R{{\mathbb R}}
\def\C{{\mathbb C}}
\def\N{{\mathbb N}}
\def\T{{\mathbb T}}
\def\wh#1{\widehat{#1}}
  \def\Id{{\mathbb I}}
\def\A{{\mathcal A}}
\def\B{{\mathcal B}}
\def\F{{\mathcal F}}
\def\H{{\mathcal H}}
\def\K{{\mathcal K}}
\def\ZZ{{\mathcal Z}}
\def\OO{{\mathcal O}}
\def\iy{\infty}
\def\ol#1{\overline{#1}}
\def\hb#1{\hbox{#1}}
\def\val#1{\vert #1\vert}
\def\no#1{\Vert #1\Vert }
\def\noop#1{\Vert #1\Vert_{\rm op}}
\def\ind#1#2{\hb{ind}_{#1}^{#2}}
\def\supp#1{\text{supp}(#1)}
\def\limk{\lim_{k\to\infty}}
\def\res#1{_{\vert #1}}
\def\inv{^{-1}}
\def\hb #1{\hbox{#1}}
\def\hb#1{\hbox{#1}}
\def\val#1{\vert #1\vert}
\def\l#1#2{L^{#1}{(#2)}}
\def\lef({\left(}
\def\rig){\right)}
\def\Sig{\Sigma}
\def\supp{\mathrm{supp}}
\def\to{\rightarrow}
\begin{document}
\title{The C*-algebra of the Boidol group}
\author[Y.-F. Lin]{Ying-Fen Lin}
\address{Mathematical Sciences Research Centre,
  Queen's University Belfast, Belfast BT7 1NN, United Kingdom}
\email{y.lin@qub.ac.uk}

\author[J. Ludwig]{Jean Ludwig}
\address{Universit\'e de Lorraine, Institut Elie Cartan de Lorraine,  UMR 7502, Metz, F-57045, France}
\email{jean.ludwig@univ-lorraine.fr}

\keywords{C*-algebra, non-$\ast$-regular Lie group, algebra of operator fields, spectrum}

\begin{abstract}
The Boidol group is the smallest non-$\ast$-regular exponential Lie group. 
It is of dimension 4 and its Lie algebra is an extension of the Heisenberg Lie 
algebra by the reals with the roots $1$ and $-1$. We describe the C*-algebra of 
the Boidol group as an algebra of operator fields defined over the spectrum of 
the group. It is the only connected solvable Lie group of dimension less than or 
equal to 4 whose group C*-algebra had not yet been determined. 


\end{abstract}
\maketitle

\section{Introduction and notations}\label{intro}
Let $\A$ be a C*-algebra and $\widehat \A$ be its  spectrum. In order to 
analyse the C*-algebra $\A$, one can use the Fourier transform $\F$, which will 
allow us to decompose $\A$ over $\widehat \A$. To be able to define such a 
transform, we choose a representative $\pi$ in every equivalence 
class $[\pi]$ of $\widehat{\A}$ and we consider the C*-algebra $ l^\iy(\wh\A) $ 
of all bounded operator fields defined over $\widehat \A$ given by 
\begin{eqnarray}\label{}
 \nn  l^\iy(\widehat \A):= 
\{A=(A(\pi) \in \B(\H_\pi))_{[\pi]\in\widehat \A}; \no A_\iy:= 
\sup_{[\pi]}\noop {A(\pi)}<\iy\},
\end{eqnarray}
where $ \H_\pi $ is the Hilbert space of $\pi$. The Fourier transform $\F$ of $ \A $ is defined by 
\begin{eqnarray}\label{}
\nn \F(a):=\hat a:=(\pi(a))_{[\pi]\in\widehat \A} \quad \text{for} \quad a\in \A.
\end{eqnarray}
It is then an injective, hence isometric, homomorphism from $ \A $ into $l^\iy(\widehat\A)$. Therefore we can analyse the C*-algebra $ \A $ by recognising the elements of $\F(\A)$ inside the (big) C*-algebra $l^\iy(\widehat \A)$. However, for most C*-algebras its spectrum is not known or the topology of its spectrum is a mystery. 

In the case of the C*-algebra $C^*(G) $ of a locally compact group $G$, we know that the spectrum $ \widehat{C^*(G)} $ of $C^*(G) $ can be identified with the unitary dual $ \widehat G $ of $ G $. Furthermore, if $G$ is an \textit{exponential} Lie group, i.e., it is a connected, simply connected solvable Lie group for which the exponential mapping $\text{exp}: \g \to G$ from the Lie algebra $ \g $ to its Lie group $ G $ is a diffeomorphism, then the Kirillov-Bernat-Vergne-Pukanszky-Ludwig-Leptin theory shows that there is a canonical homeomorphism $ K: \g^*/G \to \widehat G $ from the space of coadjoint orbits of $ G$ in the linear dual space $ \g^* $ onto the unitary dual space $ \widehat G $ of $ G $ (see \cite{Lep-Lud} for details and references). In this case, one can identify the spectrum $ \widehat{C^*(G)} $ of the C*-algebra of an exponential Lie group $G$ with the space $ \g^*/G $ of coadjoint orbits of the group $ G $. Note that connected Lie groups are second countable, so the algebra $C^*(G) $ and its dual space $ \widehat G $ are separable topological spaces. 

The idea of describing group C*-algebras as algebras of operator fields defined on the dual spaces  was first introduced in 
\cite{Fe} and \cite{Lee1}. In serial, the C*-algebra of $ax+b$-like groups \cite{Lin-Lud}, of the Heisenberg groups and of the threadlike groups \cite{Lu-Tu}, of the {affine automorphism} groups $G_{n,\mu} $ in \cite{ILL1}, \cite{ILL2}, and of the group $\T\ltimes H_1 $ 
 \cite{Lu-Re} have all been characterised as algebras of operator fields defined 
on the corresponding spectrums of the groups. Note that each case has its own 
treatment due to the complexity of the coadjoint orbits of each group. In this 
way, the C*-algebra of every exponential Lie group of dimension less than or 
equal to $4 $ has been explicitly determined with one exception, namely 
\emph{the Boidol group}, which is an extension of the Heisenberg group by the 
reals with the roots $1 $ and $-1$.  
   
In this paper, we consider this  Boidol group $G= \exp \g$, which is the only
 non-$\ast$-regular exponential Lie group of dimension 4 (see 
~\cite{Boi2}).  We will write down precisely the dual space $\widehat G$ of Boidol's 
group $G= \exp \g$ using the structure of the coadjoint orbits in $\g^*/G$ and 
determine its topology. We decompose this orbit space into the union of a finite 
sequence $(\Ga_i=S_i\setminus S_{i-1})_{i= 0}^d$, where $d= 3$, of 
relatively closed subsets. On each of the sets $\Gamma_i$, the orbit space 
topology is Hausdorff and the main question is to understand the operator fields 
$ \hat a, a\in C^*(G)$, in 
particular the behaviour of the operators $\hat{a}(\gamma)$, $\gamma\in 
\Gamma_i$,  when $\gamma$ approaches elements in $\GA_{i-1}$. 
For each of these sets $\Gamma_i$, we obtain different conditions for the 
group C*-algebra. Since the spectrum of Boidol's group  has more layers 
than the spectra of the other groups of dimension less than or equal to $4 $, the  analysis  
of the behaviour of the operator fields $\hat a, a\in C^*(G), $ is more involved. 

We first recall the following definitions which were given in \cite{Be-Be-Lu}. Let $\H$ be a Hilbert space and $\B(\H)$ denote the algebra of bounded linear operators on $\H$.
 
\begin{definition}\label{lcd}\rm
Let $d$ be a natural number.
\begin{enumerate}\label{}
\item  Let $S$ be  a topological space. We say that $S $ is \textit{locally compact of step $\leq d$}, if there exists a finite increasing family $ \emptyset \ne  S_0\subset S_{1}\subset\cdots\subset S_d=S $ of closed subsets of $S $,  such that the subsets $\Gamma_0=S_0$ and $ \Gamma_i:=S_{i}\setminus S_{i-1}$,  $i=1,\dots, d$, are locally compact and Hausdorff in their relative topologies. 

\item Let $S$ be locally compact of step $\leq d$ and $\{\H_i\}_{i= 0, \dots, d}$ be Hilbert spaces. For a closed subset $ M \subset S $, denote by $ CB(M,\H_i) $ the unital C*-algebra of all uniformly bounded operator fields  $ (\psi(\gamma)\in \B(\H_i))_{\gamma\in M\cap \Gamma_i, i= 0,\dots, d}$, which are operator norm continuous on the subsets $ \Gamma_i \cap M$ for every $i\in\{0,\dots, d\} $ with  $ \Gamma_i\cap M\ne\emptyset $, such that $\ga\mapsto \psi(\ga) $ goes to 0 in operator norm if $\ga $ goes to infinity  
on $M$. We equip the algebra $ CB(M,\H_i) $ with the infinity-norm
\begin{align*}
\no{\psi}_{M}=\sup\left\{\no{\psi(\gamma)}_{\B(\H_i)}; M \cap \Gamma_i\ne \emptyset, \, \gamma\in M \cap \Gamma_i\right\}.
\end{align*}
\item  For every $s\in S $, choose a Hilbert space $\H_s $. We define the C*-algebra $l^\iy(S) $ of uniformly bounded operator fields defined over $S $ by
\begin{eqnarray*}
 l^\iy(S):=\{(\ph(s))_{s\in S}; \ \ph(s)\in  \B(\H_s), s\in S, \ \sup_{s\in S}\noop{\ph(s)}<\iy\}.
 \end{eqnarray*}
 \end{enumerate}
\end{definition}  


\begin{definition}\label{norcontspec}\rm
Let $\A $ be a separable liminary C*-algebra such that the spectrum $\widehat{\A} $ of $\A$ is a locally compact space of step $\leq 
d$, { with closed subsets $S_i$ of $\widehat{\A}$ and}
 \begin{align*}
  \emptyset= S_{-1}\subset  S_0\subset S_{1}\subset \cdots \subset S_d=\widehat{\A}. 
\end{align*}
Assume that for every $i \in \{0, \ldots, d\}$, there exists a Hilbert space $\H_i$ and a concrete realisation $(\pi_{\ga}, \H_i)$ of $\ga$ on the Hilbert space $\H_i$ for every $\ga\in \Gamma_i:= S_i\setminus S_{i-1}$.
Denote by $\F:\A\to l^\iy(\wh \A) $ the Fourier transform of $\A $ into the C*-algebra $l^\iy(\wh \A)$ defined as Definition \ref{lcd}(3), i.e., for $a\in \A $, let
\begin{eqnarray*}
 \F(a)(\ga)=\hat a(\ga):=\pi_\ga(a) \quad \text{ for } \ga\in \Ga_i \, \text { and } i= 0,\cdots, d. 
 \end{eqnarray*}
We say that $\F(\A )$ is \textit{continuous of step} $\leq d $, if the set $S_0 
$ is the collection of all characters of $\A$, and for every $ \gamma\in 
\Gamma_i $ there is a concrete realisation $(\pi_\gamma,\H_i)$ of $\gamma$ on the Hilbert space $ \H_i$ such that $\F(\A)\res{\GA_i}$ is contained in $CB({\GA_i},\H_i)$, for every $0\leq i\leq d $. 
 \end{definition}

\section{The Boidol group }\label{boidol group}

\subsection{}\label{Boidol def}
Let $\g $ be the real Lie algebra of dimension 4 with a basis $\{ T,X,Y,Z\} $ and the non-trivial brackets 
\begin{eqnarray*}
\nn  [ T,X]=  -X, \, [T,Y]= Y, \, [ X,Y]= Z.
 \end{eqnarray*}
The simply connected connected group $G $ with Lie algebra $\g $, which we call the Boidol group, can be realised on $\R^4 $ with the multiplication
\begin{eqnarray*}
 (t,x,y,z)\cdot (t', x' , y' ,{z'})=(t+t', e^{t'}x+x',e^{-t'}y+y', z+z'+\frac{1}{2}(e^{t'}xy'-e^{-t'}x'y)).
 \end{eqnarray*}  
The inverse of $(t, x, y, z) $ is given by 
\begin{eqnarray*}
 (t,x,y,z)\inv=(-t,-e^{-t}x, -e^{t}y,-z).
 \end{eqnarray*}
 We see that the subgroup $\ZZ:=\{ (0,0,0,z); z\in\R\} $ is the centre of $G $.
{Furthermore,
\begin{eqnarray*}
 G=\R\ltimes H
 \end{eqnarray*}
is the semi-direct product of $\R $ acting on the Heisenberg group $H= \{0\}\times \R^3$.}

\subsection{The coadjoint orbit space}
In this section we give a system of representatives of the coadjoint orbits in the linear dual space $\g^*$ of $\g$. Let $\{T^*, X^*, Y^*, Z^*\}$ be the dual basis of $\{ T,X,Y,Z\} $. We have three different kinds of coadjoint orbits in $\g^* $. 
\begin{enumerate}
\item The orbits in general position: For $(\rh, \la)\in \R\times \R^*$, where $\R^*= \R\setminus \{0\}$, let $\ell=\ell_{\rh,\la}=(\rh, 0, 0,\la)\in\g^* $. The stabiliser $G(\ell)  $ of $\ell $ in $G $ is given by $G(\ell)=\exp{\R T}\cdot \exp{\R Z}$, and the coadjoint orbit $O_{\rh,\la} $ of $\ell_{\rh,\la} $ is the subset  
 \begin{eqnarray*}\label{}
 \nn O_{\rh,\la}=  \big\{ (\frac{\rh\la +xy}{\la})T^*+xX^*+y Y^*+\la Z^*; x, y\in\R \big\}
 \end{eqnarray*}
of $\g^* $.

\item The orbits of dimension 2 vanishing on $Z $: For $(\al, \be)\in\R^2$ with $\al^2+\be^2\ne 0 $, the coadjoint orbit $O_{\al,\be,0}= {\Ad}^*(G)\ell_{\al,\be,0}$ of the element $\ell_{\al,\be,0}= \al X^*+\be Y^* $ is given by
 \begin{eqnarray*}\label{}
 \nn O_{\al,\be,0} &= & \{ u T^*+(e^t\al )X^*+(e^{-t} \be) Y^*; t,u\in\R\}\\
 \nn  &= & \{ u T^*+( \vert{\be }\vert {\al}e^t )X^*+ (sign(\be) e^{-t}) Y^*; t,u\in\R\}\\
\nn  &= & \{ u T^*+(sign({\al})e^t )X^*+( \vert{\al }\vert \be e^{-t}) Y^*; t,u\in\R\}. 
 \end{eqnarray*}

\item The real characters: For every $\ta\in\R $ we have the real  character $\ell_\ta:=\ta T^* $ of $\g $.
\end{enumerate}
\noindent This gives us the following partition of $\g^* $,
\begin{align*}
\g^*=\GA_3\bigcup\GA_2\bigcup\GA_1\bigcup\GA_0,
\end{align*}
where
\begin{enumerate}\label{}
\item[(i)] 
$ \GA_3=\{O_{\rh,\la}; \rh\in\R,\la\in\R^*\}.$
 The subset  
 \begin{eqnarray*}\label{}
 \nn \Sigma_3:=  \{ \ell_{\rh,\la};  (\rh,\la)\in \R\times \R^*\}
 \end{eqnarray*}
is a section for the $G $-orbits in $\GA_3 $.
\item[(ii)] Let 
\begin{align*}
 \GA_{2,\ve,\si}:= \{O_{\rh \ve,\si,0};  \rh\in\R_{+},\ve,\si\in\{+1,-1\}\},
 \end{align*}
and 
\begin{align*}\label{}
 \nn \GA_{2}:= \bigcup_{\ve,\si\in\{ +1,-1\}}\GA_{2,\ve,\si}.
 \end{align*}
For $\om\in\R^*$ and $\si \in\{+1,-1\} $, let
\begin{eqnarray*}
 \ell_{\om,\si,0}:=(0, \om,\si ,0)\in O_{\om,\si,0}.
 \end{eqnarray*}
 The subset $\Sigma_2$ of $\GA_2 $, defined by
 \begin{align*}\label{}
 \nn \Sigma_2:= \{ \ell_{\om,\si,0}; \si\in\{ +1,-1\},\om\in \R^*\},
 \end{align*}
is  a section for the $G$-orbits in $\GA_2 $.
 \item[(iii)]  
$ \GA_{1}:= \{O_{1,0,0},O_{-1,0,0}, O_{0,1,0},O_{0,-1,0} \}.$
The subset $\Sigma_1$ of $\GA_1$, given by 
 \begin{align*}\label{}
 \Sigma_1:= \{ \ell_{\si,0,0},\ell_{0,\si,0}; \si\in\{ +1,-1\}\},
 \end{align*}
  is a section for the $G$-orbits in $\GA_1 $.
   
 \item[(iv)] Finally let 
\begin{align*}
\Sigma_0= \GA_0:= \{\ell_\ta; \ta\in\R\},
 \end{align*}
where $\ell_\ta:=(\ta,0,0,0)$.

\noindent We recall that
\begin{eqnarray}\label{}
 \nn S_i= \bigcup_{j=0}^i \Ga_j, \quad i=0,1,2,3. 
\end{eqnarray}

 \end{enumerate}   

\begin{remark}\label{closed}
\rm
\begin{enumerate}
\item 
The spectrum of $G/\ZZ $ (and of $C^*(G/\ZZ) $) can be identified with the closed subset $\{ \pi\in\widehat G; \pi(\ZZ)=\{ \Id_{\H_\pi}\}\} $ and also with the subset $S_2= \{ \Om\in \g^*/G; \ell(Z)=0, \ell\in \Om\}= \GA_2\cup \GA_1\cup \GA_0 $ of the coadjoint orbit space.

\item Let $(\la_{G/\ZZ}, \l2{G/\ZZ} )$ be the left regular representation of the group $G/\ZZ$ and $(\tilde \la_{G/\ZZ}, \l2{G/\ZZ})$ be the corresponding representation of $G $. Since $G $ is amenable, the representation $\la_{G/\ZZ}$ is injective in $C^*(G/\ZZ) $. It is easy to see (either directly or using \cite{Reiter}) that $\tilde\la_{G/\ZZ}(C^*(G))= \la_{G/\ZZ}(C^*(G/\ZZ)) $. Furthermore the kernel of $\tilde \la_{G/\ZZ} $ in $C^*(G) $ is the ideal 
\begin{eqnarray*}\label{}
 \nn K_{S_2}= \{ a\in C^*(G); \pi(a)= 0 \, \text{ for } \pi\in S_2\}.
 \end{eqnarray*}
 The C*-algebra $C^*(G/\ZZ) $ is thus isomorphic to the quotient of $C^*(G)$ by $K_{S_2} $.

\item We observe that the group $G/\ZZ$ is an extension of $\R^2 $ by $\R$ with the roots $+1$ and $-1 $, and its C*-algebra has been described in \cite{Lin-Lud}.

\item It follows from the description of the coadjoint orbits that the orbits in $ \Ga_3$, $ \Ga_2$ and $ \Ga_0$ are closed in $ \g^*$, but 
the four orbits in $ \Ga_1$  are not. Since the canonical mapping $ K: \g^*/G\to \widehat{G} $  is a homeomorphism, it follows that for every closed coadjoint orbit $ \Om\in \g^*/G$  the irreducible representations $ \pi_\Om=K(\Om)$  associated to $ \Om$  send the C*-algebra of $ G$  onto the algebra of compact operators $ \K(\H_{\pi_\Om}) $ on the Hilbert space $\H_{\pi_\Om} $  of $ \pi_\Om$.
\end{enumerate}
\end{remark}

\begin{proposition}\label{toporbitspace}
The relative topology on $\GA_3 $ is Hausdorff. In $\GA_2 $ the subsets $\GA_{2,\ve,\si}$, $\ve, \si\in \{+1, -1\}$, are open and Hausdorff, $\GA_1 $ is discrete and $\GA_0 $ is homeomorphic to $\R $. 
\end{proposition}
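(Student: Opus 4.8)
My plan is to reduce the whole statement to two ingredients: that the coadjoint quotient map $q\colon\g^*\to\g^*/G$ is \emph{open} (being the orbit map of a continuous group action, since $q^{-1}(q(U))=\Ad^*(G)U=\bigcup_{g}g\cdot U$ is open whenever $U$ is), and that the strata can be separated and isolated by $\Ad^*(G)$-invariant polynomials together with sign conditions on the linear coordinates. Throughout I write a functional as $\xi=aT^*+bX^*+cY^*+dZ^*$ in the basis dual to $\{T,X,Y,Z\}$; then $d=\langle\xi,Z\rangle$ is invariant because $Z$ is central, while the functions $P(\xi):=ad-bc$ and $bc$ are constant along the relevant orbits, as one reads off directly from the orbit descriptions given above.

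For $\GA_3$ I would argue via a continuous injection into a Hausdorff space. On $O_{\rh,\la}$ one has $b=x$, $c=y$, $d=\la$ and $a=(\rh\la+xy)/\la$, so $d=\la$ and $P=ad-bc=\rh\la$; hence the pair $(\rh,\la)=(P/d,\,d)$ is well defined and continuous on $q^{-1}(\GA_3)$ (where $d\neq0$) and constant on orbits, so it descends to a continuous map $\GA_3\to\R\times\R^*$. This map is a bijection, since $(\rh,\la)$ is exactly the section parameter of $S_3$; a continuous injection into a Hausdorff space forces $\GA_3$ to be Hausdorff. (Composing with the continuous section $(\rh,\la)\mapsto\rh T^*+\la Z^*$ and using that $q$ is open in fact upgrades this to a homeomorphism $\GA_3\cong\R\times\R^*$.)

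For $\GA_2$ and $\GA_1$ I would use the openness of $q$ together with sign conditions. On every orbit in $\GA_2\cup\GA_1$ the coadjoint action only rescales $X^*,Y^*$ by $e^{\mp t}$, so $b$ and $c$ keep constant sign along each orbit. Consequently each of the open sets $\{b>0,c>0\},\{b<0,c<0\},\{b>0,c<0\},\{b<0,c>0\}$ in $\g^*$ is met by exactly one of the families $\GA_{2,\ve,\si}$ among the orbits of $\GA_2$; since $q$ is open, its image is open in $\g^*/G$, and intersecting with $\GA_2$ shows that each $\GA_{2,\ve,\si}$ is open in $\GA_2$. On a single family the invariant $bc$ equals $\pm\rh$ and is thus a continuous injection into $\R^*$, so the family is Hausdorff. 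For $\GA_1$ the same device works with the four half-space conditions $\{b>0\},\{b<0\},\{c>0\},\{c<0\}$, each of which meets $\GA_1$ in a single orbit (e.g. $\{b>0\}$ meets only $O_{1,0,0}$, since $O_{-1,0,0}$ has $b<0$ and $O_{0,\pm1,0}$ have $b=0$); every singleton of $\GA_1$ is therefore open, and a finite space with open points is discrete.

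For $\GA_0$ I would observe that the line $L=\R T^*$ is closed in $\g^*$ and consists entirely of fixed points of the coadjoint action, since each $\ta T^*$ is a character whose orbit is a point (all brackets vanish on $T^*$, so $\ad^*(W)(\ta T^*)=0$). Thus $L$ is a closed saturated set and $q|_L\colon L\to\GA_0$ is a continuous bijection; moreover, for any $\tilde I$ open in $\g^*$ one has $q(\tilde I)\cap\GA_0=q(\tilde I\cap L)$ because each orbit of $\GA_0$ is a single point of $L$, so openness of $q$ makes $q|_L$ an open map, hence a homeomorphism $\GA_0\cong\R$. The one genuinely delicate point is that the four orbits of $\GA_1$ are \emph{not} closed (Remark~\ref{closed}), so no closedness argument is available there; the openness of $q$ is precisely what allows these non-closed orbits to be isolated by open sign conditions, and the only real work is the bookkeeping check that each such condition meets exactly the intended orbits and no others.
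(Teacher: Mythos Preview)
Your argument is correct. The paper's own proof is a single sentence invoking the sequential characterization of convergence in the coadjoint orbit space (a sequence $(O_k)$ converges to $O$ iff every $\ell\in O$ is a limit of points $\ell_k\in O_k$), citing \cite{Lep-Lud}, and then declaring the claims easy. Your route is genuinely different: instead of analysing sequences you exhibit explicit $\Ad^*$-invariant continuous functions ($d$, $ad-bc$, $bc$, and the signs of $b,c$) and combine them with the openness of the orbit map $q$. This buys you concrete separating invariants and an explicit homeomorphism $\GA_3\cong\R\times\R^*$, and it handles $\GA_1$ cleanly despite its orbits not being closed. The paper's approach has the advantage of being the same device used in the next two propositions to identify limit sets of sequences leaving a stratum, so it fits the overall narrative more economically; your approach is more self-contained and arguably more transparent for the purely topological statement at hand.
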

 
\begin{proof}The proof follows easily from the fact that a sequence of coadjoint orbits $(O_k)_{k\in\N}\subset \g* $ converges to an orbit $O $ if and only if for every $\ell\in O $ and every $k\in\N $ there exists an $\ell_k\in O_k $ such that $\limk \ell_k=\ell $
(see \cite{Lep-Lud}, page 135). 
\end{proof}  

In the following we recall the definition of properly converging sequences and their limit sets, we also give a proposition of properly converging sequences in our group. 

\begin{definition}\label{def:properlyconverging}
\rm Let $S$ be a topological space. Let $\overline{x}= (x_k)_k$ be a net in $S$. We denote by $L(\overline{x})$ the set of all limit points of the net $\overline{x}$. A net $\overline{x}$ is called {\it properly converging} if $\overline{x}$ has limit points and if every cluster point of the net is a limit point, i.e., the set of limit points of any subnet is always the same, indeed, equals to $L(\overline{x})$.
\end{definition}

We know that every converging net in $S$ admits a properly converging subnet, hence, we can work with properly converging nets in our space.

\begin{proposition}\label{limtwothree}
Let $\ol O:= (O_{{\rh_k,\la_k}})_k$ be a properly converging sequence in $\GA_3$ such that $\limk \la_k= 0$. Then
\begin{enumerate}\label{}
\item the sequence $(\om_k)_k:=(\rh_k\la_k)_k $ converges to some $\om\in\R $;
\item if $\om\ne 0 $, then the limit set $L $ of the sequence $\ol O $ is the 
 two point set $\{O_{\om,- 1,0}, O_{-\om,1,0}\}\subset \GA_2 $;
\item if $\om=0 $, then the  limit set $L $ is the subset $\GA_1\bigcup \GA_0 $ of the orbit space. 
 \end{enumerate}
\end{proposition}

\begin{proof} 
(1) Suppose that the sequence $(O_{\rh_k,\la_k})_k\subset \GA_3  $ converges to some $O\in \GA_2\cup\GA_1\cup\GA_0 $. Take  $\ell= t T^*+xX^*+yY^*\in O $. Then for any $k\in\N $ we have an element $\ell_k= (\frac{\rh_k\la_k +x_ky_k}{\la_k})T^*+x_kX^*+y_k Y^*+\la_k Z^*\in O_{\rh_k, \la_k}$ which converges to $\ell $. In particular, $\limk y_k= y, \limk x_k= x $ and $\limk \frac{\rh_k\la_k +x_ky_k}{\la_k}= t $. Let $\om:=-xy $. Since $\lim_{k\to \infty} \la_k= 0 $, it follows that $\limk \rh_k\la_k= \om $.

(2) Suppose that $\om\ne 0$. Let $O $ be a limit point of the sequence $(O_{\rh_k,\la_k})_k $. Then $O$ is contained in $\GA_2\cup\GA_1\cup\GA_0$. Let $\ell= tT^*+xX^*+yY^*\in O $. Since $0 \ne \om= -xy $ we have that $O \in \GA_2 $. We can assume that $t= 0, \val x= \vert{\om }\vert  $ and $\val y= 1$. If $y= 1 $, then $x= -\om $ and conversely if $y= -1 $ then $x= \om$. 

(3) If $\om= 0 $ then $\limk x_ky_k= 0 $. Take $\ell= \si X^*\in \GA_1 $. We can use  $\ell_k= \si X^*-\si \om_k Y^*+\la_k 
Z^*= (\frac{\rh_k\la_k +\si (-\si \om_k)}{\la_k})T^*+\si X^*-\si \om_k Y^*+\la_k Z^*\in O_{\rh_k,\la_k}$ and $\om_k:=\rh_k\la_k$ for $k\in\N. $ Then $\limk \ell_k= \ell $. Similarly for $\ell= \si Y^* $. For $\ell= \al T^*\in \GA_0 $, we take $x_k:= - \sqrt{\val{\al \la_k-\om_k}}$ and $y_k:= \sqrt{\val{\al \la_k-\om_k}}sign(\al \la_k-\om_k)$, provided  $\al \la_k-\om_k\ne 0 $. Otherwise we use $x_k:= 0=: y_k$. Then $\ell_k= { \al T^*}+\la_k Z^*\in O_{\rh_k,\la_k}$ and $\limk \ell_k=\ell $.
\end{proof}

The following proposition can be found in \cite[Theorem~2.3]{Lin-Lud}.

\begin{proposition}\label{omiszero}
Let $\ol O= (O_{\ve\rh_k,\si,0})_k$, $\rh_k > 0$ and $\ve,\si\in\{ +1,-1\} $, be 
 a properly converging sequence in $\GA_2 $ with limits in $\GA_0\cup\GA_1 $. Then
 \begin{enumerate}\label{}
\item $\limk \rh_k= 0 $ and the limit set of $\ol O $ is $ \{O_{\ve,0,0},O_{0,\si,0}\}\cup \GA_0$. 
\item The closure of $O_{\si,0,0}$ (resp. $O_{0,\si,0}$) is $O_{\si,0,0}\cup 
\GA_0 $ (resp. $O_{0,\si,0}\cup \GA_0 $) for $\si= +1$ or $\si= -1 $. 
 \end{enumerate}
 \end{proposition}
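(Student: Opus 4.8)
The plan is to argue directly from the explicit orbit description together with the convergence criterion recalled in the proof of Proposition \ref{toporbitspace}: a sequence $(O_k)_k$ converges to $O$ precisely when, for each $\ell\in O$ and each $k$, one can choose $\ell_k\in O_k$ with $\limk\ell_k=\ell$. Conceptually, every orbit in $\GA_2\cup\GA_1\cup\GA_0$ vanishes on $Z$, hence is inflated from the quotient $G/\ZZ\cong\R\ltimes\R^2$, an $ax+b$-like group; this is exactly why the statement already appears as Theorem~2.3 of \cite{Lin-Lud}, and the computation below is the translation of that result into the present coordinates. Writing the orbit explicitly,
\begin{equation*}
O_{\ve\rh_k,\si,0}=\{uT^*+e^{t}\ve\rh_k X^*+e^{-t}\si Y^*;\ t,u\in\R\},
\end{equation*}
together with $O_{\ve,0,0}=\{uT^*+vX^*;\ u\in\R,\ v\in\ve\R_{>0}\}$, $O_{0,\si,0}=\{uT^*+wY^*;\ u\in\R,\ w\in\si\R_{>0}\}$ and $\GA_0=\{\ta T^*;\ \ta\in\R\}$, the whole argument becomes coordinate bookkeeping.

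First I would pin down $\limk\rh_k=0$. By Proposition \ref{toporbitspace} each $\GA_{2,\ve,\si}$ is Hausdorff, so if some subsequence of $(\rh_k)$ stayed in a compact subinterval of $\R_{>0}$ it would subconverge to an orbit of $\GA_2$, contradicting that the limit set lies in $\GA_0\cup\GA_1$. If instead $\rh_k\to\infty$ along a subsequence, a direct check on the two coordinates shows that no sequence $\ell_k\in O_{\ve\rh_k,\si,0}$ can converge to a point of $\GA_0\cup\GA_1$ (forcing $e^{-t_k}\si\to 0$ requires $t_k\to+\infty$, whence $e^{t_k}\ve\rh_k\to\infty$), contradicting that the limit set is nonempty and contained there. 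Since $\rh_k>0$, the only remaining possibility is $\limk\rh_k=0$.

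Granting $\rh_k\to 0$, I would verify the two inclusions for the limit set $L$. For $\supseteq$: to reach $uT^*+vX^*\in O_{\ve,0,0}$ set $e^{t_k}=\val v/\rh_k\to\infty$, so that $e^{t_k}\ve\rh_k=v$ while $e^{-t_k}\si=\si\rh_k/\val v\to 0$; symmetrically one reaches $O_{0,\si,0}$ by the bounded choice $e^{-t_k}=\val w$; and to reach $\ta T^*\in\GA_0$ one uses the balanced scaling $e^{t_k}=\rh_k^{-1/2}$, giving $\ell_k=\ta T^*+\ve\sqrt{\rh_k}\,X^*+\si\sqrt{\rh_k}\,Y^*\to\ta T^*$. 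For $\subseteq$: since $e^{t}\ve\rh_k$ keeps the sign $\ve$ and $e^{-t}\si$ keeps the sign $\si$ for all $t$, no point whose $X^*$-coordinate has sign $-\ve$ or whose $Y^*$-coordinate has sign $-\si$ can be a limit, which excludes $O_{-\ve,0,0}$ and $O_{0,-\si,0}$; as the limit set is assumed to lie in $\GA_0\cup\GA_1$, this yields $L=\{O_{\ve,0,0},O_{0,\si,0}\}\cup\GA_0$. The step I expect to be most delicate is precisely this reverse inclusion, namely ruling out the wrong-sign members of $\GA_1$, combined with the ``saddle'' scalings ($e^{t_k}\to\infty$ versus the balanced $\rh_k^{-1/2}$) needed to realise the $X^*$-type, $Y^*$-type and character limits simultaneously from the single family $(O_{\ve\rh_k,\si,0})_k$.

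Finally, for part (2) I would compute the closure of a single orbit: by the same criterion applied to the constant sequence $O_k\equiv O_{\si,0,0}$, its closure in the orbit space consists exactly of the orbits contained in its closure in $\g^*$. Reading off $O_{\si,0,0}=\{uT^*+vX^*;\ v\in\si\R_{>0}\}$, the only boundary points obtained as $v\to 0$ are the characters $\{uT^*;\ u\in\R\}=\GA_0$, while no orbit of $\GA_2$ or $\GA_3$ can occur since those have nonzero $Y^*$- or $Z^*$-coordinate. Hence $\ol{O_{\si,0,0}}=O_{\si,0,0}\cup\GA_0$, and the argument for $O_{0,\si,0}$ is identical with the roles of $X^*$ and $Y^*$ interchanged.
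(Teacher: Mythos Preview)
The paper itself does not prove this proposition; it simply records that the result is Theorem~2.3 of \cite{Lin-Lud}. Your proposal supplies a direct, self-contained argument from the explicit orbit formulas and the sequential convergence criterion of Proposition~\ref{toporbitspace}, and that argument is correct. Since the paper offers no proof to compare against, the relevant comparison is with the cited reference, and what you have written is essentially the translation of the \cite{Lin-Lud} computation into the present coordinates, as you note yourself.

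One small tightening: in ruling out $\rh_k\to\infty$, your parenthetical treats only the case where the $Y^*$-coordinate $e^{-t_k}\si$ is forced to $0$. You should also remark that if instead $e^{-t_k}$ stays bounded away from $0$ and $\infty$ (the case of a putative limit in $O_{0,\pm,0}$), then $e^{t_k}$ is bounded and $e^{t_k}\rh_k\to\infty$ just the same, so the $X^*$-coordinate still diverges. With that one line added, every case is covered and the reverse inclusion in part~(1), which you correctly flag as the delicate step, is airtight.
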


Note that throughout the paper the symbols $ \si, \ve$ will denote elements of the set $\{+1,-1\} $.

\section{The Fourier transform }\label{Fourier}

 
 \begin{definition}\label{pi a defined}
\rm   
Let $(\pi,\H) $ be a representation of $G$ and ${{\al}}: G\to G $ be an automorphism. We define the new representation $(\pi^{{\al}} ,\H)$ of $G $ by 
\begin{eqnarray*}
 \pi^{{\al}}(g):= \pi({{\al}}\inv (g)) \, \text{ for } \, g\in G.
 \end{eqnarray*}
\end{definition}

\subsection{The irreducible representations} Let the automorphism $\al: G\to G$ be given by
 \begin{align*}\label{}
 \nn {{\al}}(t,x,y,z):=  (t,-x,-y,z).
 \end{align*}

$ {\bold{(I)}.}$ For $\ell_{\rh,\la}\in \GA_3 $, consider the Pukanszky polarisation $\p:= \text{span}\{T,Y,Z\}$ at $\ell_{\rh,\la} $. This gives us the irreducible representation 
\begin{eqnarray*}
 \pi_{\rh,\la}:= \ind P G\ch_{\rh,\la},
 \end{eqnarray*}
{where $P=\exp \p$ and $\ch_{\rh,\la}(\exp(t T)\exp(y Y)\exp(z Z)):= e^{-i (\rh t+ \la z )}$ for $t, y, z\in\R $} is the unitary character of $P $ corresponding to the character $\ell_{\rh, \la} $ of $\p$. The Hilbert space $ L^2(G/P, \ch_{\rh,\la}) $ of $\pi_{\rh,\la} $ can be realised as $\l2\R $. Let $(t, x, y, z)\in G, \ \xi\in {L^2(\R)}$ and $u\in\R $. We have
\begin{align*}  
 \pi_{\rh,\la}(t,x,y,z)\xi(u)= e^{t/2}e^{-i\rh t}e^{-i\la z}e^{-i{\la xy/2}{}}e^{i\la e^t yu}\xi(e^t u-x).
 \end{align*}
 For $F\in L^1(G) $, define its partial Fourier transform $\hat F^{3,4} $  by
 \begin{align*}\label{}
 \nn \hat F^{3,4}(t,x,a,b):= \int_{\R^2}F(t,x,y,z)e^{-i(ay+bz)}dydz \, \text{ for all } \, t, x, a, b\in \R.
 \end{align*}
Then for any $F\in\l1G$ such that $\hat F^{3,4} $ is contained in $C_c(\R^4)$, $ \xi\in L^2(\R)$ and $u \in \R$, we have that 
\begin{eqnarray}\label{kernel pirhola}
 \pi_{\rh,\la}(F)\xi(u)\nn  & = & \int_G F(g)\pi_{\rh,\la}(g)\xi(u)dg\\
 &=& {\int_\R \big(\int_\R e^{t/2} e^{-i\rh t}\hat F^{3,4}(t,x,\la(\frac{x}{2}-e^tu),\la)\xi(e^tu-x)dt\big)dx}\\  
 \nn & = & \int_\R \big(\int_\R e^{t/2} e^{-i\rh t}\hat F^{3,4}(t,e^tu-x,-\frac{\la}{2}(x+e^tu),\la)dt\big)\xi(x)dx.
 \end{eqnarray}
Therefore,  $\pi_{\rh,\la} $ is a kernel operator with kernel function 
\begin{eqnarray}\label{kerfunc rh la}
 F_{\rh,\la}(u,x)= \int_\R e^{t/2} e^{-i\rh t}\hat F^{3,4}(t,e^tu-x,-\frac{\la}{2}(x+e^tu),\la)dt \, \text{ for } \, u, x\in\R.
 \end{eqnarray}
 Furthermore, for $u\in \R$, we obtain the identity
\begin{align*}\label{}
 \nn \pi_{\rh,\la}^{{\al}}(t,x,y,z){ \xi}(u)= e^{t/2}e^{-i\rh t}e^{-i\la z}e^{-i\la xy/2}e^{-i\la e^t yu}\xi(e^t u+x),
 \end{align*}
which shows that 
\begin{eqnarray}\label{kernel pirhola s}
 \pi_{\rh,\la}^{{\al}} (F)\xi(u)\nn  & = & \int_{R^4} e^{t/2}e^{-i\rh t}e^{-i\la z}e^{-i{\la xy/2}{}}e^{-i\la e^t yu}F(t,x,y,z)\xi(e^t u+x)dzdydxdt\\
 & = & \int_\R \big(\int_\R e^{t/2} e^{-i\rh t}\hat F^{3,4}(t,-e^tu+x,\frac{\la}{2}(x+e^tu),\la)dt\big)\xi(x)dx.
 \end{eqnarray}
 Let $S: \l2\R \to \l2\R$ be the unitary operator defined by
 \begin{align*}\label{}
 \nn S(\xi)(u):= \xi(-u) \quad \text{for all} \quad \xi\in L^2(\R), u\in\R.
 \end{align*}
Then
\begin{eqnarray}\label{pilarh s}
 S\circ  \pi_{\rh,\la}(t,x,y,z)\circ S(\xi)(u)  &= & e^{t/2}e^{-i\rh t}e^{-i\la z}e^{-i\la xy/2}{{e^{-i\la e^t yu}\xi(e^t u+x)}}\\
  \nn&= & \pi_{\rh,\la}^{{\al}} (t,x,y,z)(\xi)(u), \quad (t,x,y,z)\in G.
 \end{eqnarray}

We shall need later in Section \ref{NCDL} an  equivalent version of the representations $\pi_{\rh,\la} $.

\begin{definition}\label{Vk pl defined}
\rm   
 For any $(\rho, \la)\in \R\times \R^*$ and measurable function $\va: \R \to \R$, the operator $V: 
\l2{\R, \frac{dx}{\vert x\vert }} \to \l2{\R,dx}$ defined by
\begin{eqnarray*}
 V(\et)(s)= \xi(s):= \frac{1}{\vert s\vert ^{1/2}}\et(\vert \la \vert s) e^{ i\va({ \la s )}}, \quad \et\in\l2{\R, \frac{dx}{\val x}}, \, s\in \R,
\end{eqnarray*}
is a unitary operator. We have that
\begin{align*} 
 V^*(\xi)(u)= \frac{\vert u\vert ^{1/2}}{\sqrt{\val {\la}}}e^{- i\va(u)}\xi\big(\frac{u}{\vert \la\vert}\big), \quad u\in\R, \, \xi\in \l2 {\R}. 
\end{align*}
We can also view $V$ as an operator $V: \l2{\R_+, \frac{dx}{\val x}}\oplus \l2{\R_-, \frac{dx}{\val x}}\to \l2{\R_+, {dx}}\oplus  \l2{\R_-, {dx}}$. It is easy to see that 
\begin{align}\label{ScommVk}
  S\circ  V= V\circ  S, 
\end{align}
where $S: \l2{\R_\si, \frac{du}{ \vert{u }\vert }}\to \l2{\R_{-\si}, \frac{du}{ \vert{u }\vert }} $ is defined as before by
\begin{align*}\label{}
 \nn  S(\et)(u)=  \et(-u), \quad u\in\R_\si.
 \end{align*}
\end{definition}

Let us now take the measurable function $\va$ as 
\begin{eqnarray}\label{vak def}
 \nn \va(s):= e^{i\rh\ln( \vert{s }\vert )}, \, s\in\R. 
\end{eqnarray}
For $ \et \in C_c^\iy(\R_+), s\in\R$ and $ \hat F^{3,4}\in C_c^\iy (\R^4) \subset \l1G$,  we see by {(\ref{kernel pirhola})} that
{\begin{eqnarray}\label{rh al circ Vk}
\pi_{\rh, \la}(F)(V(\et))(s)
\nn  &= & \int_\R \int_\R e^{t/2} e^{-i\rh t}\hat F^{3,4}(t,x,-\frac{\la}{2}(-x+2e^ts),\la)\\
 \nn & & \frac{e^{i\rh \ln(\val{-\la x+\la e^ts})}\val{\la}^{1/2}}{\val{-\la x+e^t \la s }^{1/2}}\et(-\vert \la\vert  x+\vert \la\vert  e^t s)dxdt.
 \end{eqnarray}}
Hence, writing $\la=\ve \vert{\la}\vert $, we obtain:
 {\begin{eqnarray}\label{vkstar circ Vk}
 \nn V^*(\pi_{\rh, \la}(F)(V(\et)))(s)
 &= &
 {e^{-i\rh \ln(\val s)}}\int_\R \int_\R  e^{-i\rh t}\hat F^{3,4}(t, x, \frac{\la}{2}x- \ve {e^ts}{},\la)\\
 \nn & & \frac{e^{i\rh \ln(\val{-\la x+  \ve e^ts})}\val{e^{t}s}^{1/2}}{\val{-\ve\la x+  e^ts }^{1/2}}\et(- \ve\la x+ e^t s )dxdt\\  
&= & \nn
\int_\R \int_\R  \hat F^{3,4}(t, x, \frac{\la}{2}x- \ve{e^ts}{},\la)\\
\nn & & 
\frac{e^{i\rh(\ln(\val{-\ve\la x+ e^ts}- \ln(\val{e^{t}s}))}\val{e^{t}s}^{1/2}}{\val{-\ve\la x+ e^ts}^{1/2}}\et(-\ve\la x+ e^t s)dxdt\\
&= & 
\nn{e^{-i\rh \ln(\val s)}}\int_\R \int_\R  e^{-i\rh t}\hat F^{3,4}(t, \frac{1}{\la}(\la x+\ve{e^ts}),\frac{1}{2}(\la x-\ve {e^ts}), \la)\\
 \nn & & \frac{e^{i\rh \ln(\val{\la x})}\val{e^{t}s}^{1/2}}{\val{\la x}^{1/2}}\et(- \ve\la x)dxdt\\
\nn &= &
 \frac{e^{-i\rh \ln(\val s)}\val{s}^{1/2}}{ \vert{\la}\vert}{}\int_\R \int_\R  e^{-i\rh t}{e^{t/2}}\hat F^{3,4}(t,\frac{-\ve}{\la}( x-{e^ts}),\frac{-\ve}{2}(x+{e^ts}),\la)dt\\
 \nn & & \frac{e^{i\rh \ln(\val{x})}}{\val{x}^{1/2}}\et( x)dx\\
&= & 
\frac{e^{-i\rh \ln(\val s)}\val{s}^{1/2}}{ \vert{\la}\vert}{}\int_\R H(s,x)\frac{e^{i\rh \ln(\val{x})}}{\val{x}^{1/2}}\et(x)dx,
 \end{eqnarray} 
 }
 where
 \begin{eqnarray*}\label{}
 \nn H(s, x) &:= &
 \int_\R  e^{-i\rh t}{e^{t/2}}\hat F^{3,4}(t,\frac{-\ve}{\la}( x-{e^ts}), \frac{-\ve}{2}(x+{e^ts}), \la)dt, \quad x, s\in \R.
 \end{eqnarray*}
Using partial integration, we see that
\begin{eqnarray*}\label{}
 \nn H(s, x) &= &
\frac{1}{i\rh} \int_\R e^{-i\rh t}\partial_t\Big({e^{t/2}}\hat F^{3,4}(t,\frac{-\ve}{\la}( x-{e^ts}),\frac{-\ve}{2}(x+{e^ts}),\la)\Big)dt\\
\nn  
&= &
\frac{1}{i\rh}\int_\R e^{-i\rh t} \Big(\frac{1}{2}e^{t/2} \hat F^{3,4}(t,\frac{-\ve}{\la}( x-{e^ts}),\frac{-\ve}{2}(x+{e^ts}),\la)\\
 \nn  & &+ e^{t/2} \ve\frac{e^ts}{\la} \partial_2\hat F^{3,4}(t,\frac{-\ve}{\la}( x-{e^ts}),\frac{-\ve}{2}(x+{e^ts}),\la)\\
\nn  & &- e^{t/2}\frac{\ve}{2}  {e^ts}\partial_3\hat F^{3,4}(t,\frac{-\ve}{\la}( x-{e^ts}),\frac{-\ve}{2}(x+{e^ts}),\la)\Big)dt.
\end{eqnarray*}
Hence, there exist positive continuous functions $\psi, \psi'$ with compact support on $\R$ such that
 \begin{eqnarray}\label{Hintro}  
\nn \vert{H(s, x) }\vert &\leq &
 \frac{1}{ \vert{\rh \la}\vert }\psi'(\la)(1+ \vert{s }\vert )\int_\R \psi'(t) \psi'(\frac{x-e^ts}{\la}) \psi'(x+e^ts)dt\\
&\leq&
\frac{1}{ \vert{\rh \la}\vert }\psi(\la)\int_\R \psi(t)\psi(\frac{x-e^ts}{\la})\psi(x+e^ts)dt, \quad s, x\in\R.
 \end{eqnarray}
Furthermore, with the automorphism $ \al$ on the group $G$ and $s\in\R $, we have that
 \begin{eqnarray}\label{vkstar s circ Vk}
 V^*(\pi_{\rh, \la}^{{\al}} (F)(V(\et)))(s)
 \nn  &= & {e^{-i\rh \ln(\val s)}}\int_\R \int_\R  e^{-i\rh t}\hat F^{3,4}(t, x, \frac{\la}{2}x+\ve {e^ts}{},\la)\\
 \nn & & \frac{e^{i\rh \ln(\val{\la x+  \ve e^ts})}\val{e^{t}s}^{1/2}}{\val{\ve\la x+  e^ts }^{1/2}}\et(\ve\la x+  e^t s )dxdt\\
\nn  &= & \int_\R \int_\R  \hat F^{3,4}(t, x, \frac{\la}{2}x+\ve{e^ts}{},\la)\\
 \nn & & \frac{e^{i\rh (\ln(\val{\la x+  \ve e^ts}- \ln(\val{e^{t}s}))}\val{e^{t}s}^{1/2}}{\val{\la x+ \ve e^ts}^{1/2}}\et(\ve\la x+ e^t s)dxdt\\
 &= & \int_\R \int_\R  \hat F^{3,4}(t,-x,-\frac{\la}{2}x+\ve{e^ts}{},\la)\\
 \nn & & \frac{e^{i\rh (\ln(\val{-\la x+  \ve e^ts}- \ln(\val{e^{t}s}))}\val{e^{t}s}^{1/2}}{\val{-\la x+  \ve e^ts}^{1/2}}\et(-\ve\la x+ e^t s)dxdt.
 \end{eqnarray}

 ${\bold{(II)}.}$ For $\ell\in\GA_2 \bigcup \GA_1$, the subalgebra $\h:= \text{span}\{X,Y,Z\}$ is a Pukanszky polarisation at $\ell$. Therefore the unitary representation 
\begin{eqnarray*}
 \pi_{\ell}:=\ind H G\ch_{\ell},
 \end{eqnarray*}
where $H:= \exp\h $ and $\ch_{\ell}(\exp(U)):= e^{-i \ell(U)}$ for $U\in \g$, is irreducible.

Take $\ell:= (0,\mu,\nu,0) $ for some $\mu,\nu\in\R $ with $\mu^2+\nu^2\ne 0$. We have that $L^2(G/H,\chi_\ell)\simeq\l2\R $, and for $(t,x,y,z)\in G, \va\in L^2(\R) $ and $v\in\R $:
\begin{align*}
 \pi_{\ell}(t,x,y,z)\va(v) = e^{-i\mu e^{v-t}x}e^{-i \nu e^{t-v}y}\va(v-t).
 \end{align*}
Hence for $F\in\l1G $,
\begin{align*}
 \pi_{\ell}(F)\va(v) = \int_\R \hat F^{2,3,4}(v-t,\mu {e^{t}},\nu 
{e^{-t}},0)\va(t)dt.
 \end{align*}
This means that  $\pi_\ell(F) $ is a kernel operator with kernel function
\begin{eqnarray}\label{kerfunc mu nu}
 F_{\ell}(u,t)= \hat F^{2,3,4}(v-t,\mu {e^t},\nu {e^{-t}},0) \quad \text{for } \, v, t\in \R.
 \end{eqnarray}
Let us write an equivalent representation for $\pi_{\ell}$: We use the multiplication invariant measure $\frac{du}{\val u} $  on $\R_{\si}$, $\si\in\{ +1,-1\} $. Let 
\begin{eqnarray*}
 U_\si: \l2{\R} \to \l2{\R_\si,\frac{du}{\vert u\vert }} \, \text{ by } \, 
U_\si\xi(u):= \xi(-\ln (\si u)), \, u\in\R_\si.
 \end{eqnarray*}
 Then $U_\si  $ is a unitary operator and 
\begin{eqnarray*}
 (U_\si)^*(\et)(s)= \et(\si e^{- s}) \, \text{ for } \, s\in\R, \et\in 
\l2{\R_\si, \frac{du}{\val u}}.
 \end{eqnarray*}
Let 
\begin{eqnarray}\label{ta mu nu def} 
 \ta^{\si}_{\mu,\nu}:=U_\si\circ \pi_{(0,\mu,\nu,0)}\circ U^*_\si. 
\end{eqnarray}
We obtain the relation:

\begin{eqnarray*}
 \ta_{\mu,\nu}^{\si}(t,x,y,z)\et(u)\nn 
 &= & (\pi_\ell(t,x,y,z)(U_\si^*(\et)))(-\ln(\si u))\\
 \nn  &= & e^{-i\mu e^{-\ln(\si u)-t}x}e^{-i \nu e^{t+\ln(\si 
u)}y}U_\si^*(\et)(-\ln(\si u)-t)\\  
\nn  &= & e^{-i(\si\mu) u\inv e^{-t}x}e^{-i (\si\nu )u e^{t}y}\et(e^tu), \quad 
\et\in\l2{\R_\si, \frac{du}{\val u}}.
 \end{eqnarray*}
 We see that
 \begin{eqnarray}\label{equi - + s} 
 (\ta^{\si}_{\mu,\nu}){{^\al}} (t,x,y,z)\et(u)&=& e^{i(\si\mu) u\inv 
e^{-t}x}e^{i (\si\nu )u e^{t}y}\et(e^tu)\\
 \nn  &= & (\ta^{\si}_{-\mu,-\nu})(t,x,y,z)\et(u), \quad \et\in\l2{\R_\si, 
\frac{du}{\val u}}.
\end{eqnarray}
On the other hand,
\begin{eqnarray}\label{tamunu circ S}
  S\circ \ta^\si_{\mu,\nu}(t,x,y,z)\circ  S(\et)(u) &= & e^{-i(-\si\mu) u\inv 
e^{-t}x}e^{-i (-\si\nu )u e^{t}y}\et(e^tu)\\
 \nn&=& \ta^{-\si}_{\mu,\nu}(t, x, y, z)\et(u), \quad \et\in\l2{\R_{-\si}, 
\frac{du}{\val u}}, u\in \R_{-\si}.
\end{eqnarray}
In particular the representations $ \ta^\si_{\mu,\nu}$ and $\ta^{-\si}_{\mu,\nu}$ are equivalent. Furthermore, for $\et\in L^2(\R_\si, \frac{du}{|u|})$ and $F\in L^1(G) $:
 \begin{eqnarray}\label{ta mu nu}
 \ta_{\mu,\nu}^{\si}(F)\et(u) &= & \int_G F(t,x,y,z)e^{-i\si\mu u\inv e^{-t}x}e^{-i \si\nu u e^{t}y}\et(e^tu)dtdxdydz\\ 
\nn  &= & \int_\R \hat F^{2,3,4}(t,\si\mu (e^{t}u)\inv ,\si\nu (e^tu), 0)\et(e^tu)dt\\
\nn  &= & \int_\R \hat F^{2,3,4}(t-\ln(\si u),{ \mu e^{-t} ,\nu e^t}, 0)\et(\si e^t)dt. 
 \end{eqnarray}
 
 ${\bold {(III)}.}$ For $\ell\in \GA_0 $, let
\begin{eqnarray*}
 \pi_{\ell}:=\ch_{\ell}.  
 \end{eqnarray*}
Hence for $F\in\l1G $,
\begin{eqnarray*}
 \pi_{\ell}(F)= \int_\R\int_{\R^3}e^{-it\ell}F(t, h)dhdt.
 \end{eqnarray*}

In order to define the Fourier transform $a\mapsto \hat a, a\in C^*(G)$, we 
identify $\widehat G $ with the set $\GA:= \GA_3\cup \GA_2 \cup \GA_1\cup\GA_0$ 
and we let :
 \begin{eqnarray}\label{fourrier def}
  \hat a(\rh,\la) &:= & \pi_{\rh,\la}(a)\in\K(L^2(\R)),\, \, \rh\in \R,\la\in\R^*,\\
 \nn  \hat a(\mu,\nu,0)&:= & \ta^+_{\mu,\nu}(a)\in \K(L^2(\R_+, \frac{du}{u})), \, \, \ell_{\mu,\nu,0}\in{{\Gamma_2}},\\
 \nn  \hat a(\mu,\nu,0)&:= & \ta^+_{\mu,\nu}(a)\in \B(L^2(\R_+, \frac{du}{u})), \, \, \ell_{\mu, \nu,0}\in {{\Ga_1}},\\ 
\nn  \hat a(\ell)&:=&\ch_\ell(a)\in \C,\, \, \ell\in \GA_0.
 \end{eqnarray}
 

\section{Norm control of dual limits. }\label{NCDL}

In this section, we will describe the conditions that our C*-algebra as the image of the Fourier transform must fulfil, and characterise the C*-algebra of the Biodol group which will be our main result (Theorem~\ref{Cster G identified}).

\subsection{Norm convergence }\label{norm conv}$ $
\begin{definition}\label{toinfty}
\rm   Let $\OO=(O_k)_k $ be a sequence in $\g^* $. We say that $\OO $ \textit{tends to infinity}, if for every bounded subset $B\subset \g^* $, the subsets $O_k\cap {\Ad}^*(G)B$ of $\g^* $ are empty for $k $ large enough. This is equivalent to the property that no sequence $(l_k\in O_k)_{k\in \N} $ admits a convergent subsequence.
 \end{definition}

We have the following norm convergence condition. 

\begin{proposition}\label{normcontinuity}
For any $a\in C^*(G)$, the mappings $O\mapsto \pi_O (a)$ are norm continuous on the different sets $\GA_i$ for $i= 0, 1, 2, 3$. For any sequence $(O_k)_k $ tending to infinity, we have that $\limk \noop {\pi_{O_k}(a) }= 0$.  
\end{proposition}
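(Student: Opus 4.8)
The plan is to reduce both assertions to Hilbert--Schmidt estimates on a dense subalgebra and then to read off continuity and decay directly from the explicit kernel formulas \eqref{kerfunc rh la} and \eqref{kerfunc mu nu}. First I would exploit that each $\pi_O$ is a $*$-representation, so that $\|\pi_O(a)\|_{\mathrm{op}}\le\|a\|_{C^*(G)}$ for every $a\in C^*(G)$ and every orbit $O$. A standard $\varepsilon/3$ argument then shows it is enough to prove the two claims on a dense subset $\mathcal D\subseteq C^*(G)$: writing $\|\pi_O(a)-\pi_{O'}(a)\|\le 2\|a-F\|+\|\pi_O(F)-\pi_{O'}(F)\|$ and $\|\pi_{O_k}(a)\|\le\|a-F\|+\|\pi_{O_k}(F)\|$ for $F\in\mathcal D$ close to $a$, both statements for $a$ follow from the corresponding statements for $F$. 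I would take $\mathcal D$ to be the set of $F\in L^1(G)$ with $\hat F^{3,4}\in C_c^\infty(\R^4)$, which is dense in $L^1(G)$ and hence in $C^*(G)$; for such $F$ the further transform $\hat F^{2,3,4}$ is compactly supported in its first argument and rapidly decaying in the remaining ones.

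Next I would dispose of the two easy strata. By Proposition \ref{toporbitspace} the set $\Gamma_1$ is discrete, so norm continuity there is automatic, and being finite it contains no sequence tending to infinity; nothing needs to be checked on $\Gamma_1$. On $\Gamma_0\cong\R$ the operators $\pi_\ell(F)$ are the scalars $\int_\R e^{-it\tau}\bigl(\int_{\R^3}F(t,h)\,dh\bigr)\,dt$, i.e.\ the Fourier transform of a function in $L^1(\R)$; continuity in $\tau$ and vanishing as $\tau\to\pm\infty$ are then immediate from the Riemann--Lebesgue lemma.

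The substance lies on $\Gamma_3$ and $\Gamma_2$, where by Remark \ref{closed} the orbits are closed, so $\pi_O(C^*(G))=\K(\H_{\pi_O})$ and, for $F\in\mathcal D$, the operator $\pi_O(F)$ is a Hilbert--Schmidt integral operator whose kernel is $F_{\rho,\lambda}$ from \eqref{kerfunc rh la} on $\Gamma_3$ and (up to the unitary equivalence preserving both operator and Hilbert--Schmidt norms) $F_\ell$ from \eqref{kerfunc mu nu} on $\Gamma_2$. Using $\|\pi_O(F)\|_{\mathrm{op}}\le\|\pi_O(F)\|_{HS}=\|F_O\|_{L^2}$, I would obtain continuity by showing $\|F_O-F_{O'}\|_{L^2}\to0$ as $O'\to O$: the kernel depends continuously on the orbit parameters $(\rho,\lambda)$ (resp.\ $(\mu,\nu)$) pointwise, and since $\hat F^{3,4}$ (resp.\ $\hat F^{2,3,4}$) is compactly supported the kernels have locally uniformly compact support in the integration variables with a uniform majorant, so dominated convergence yields $L^2$-convergence and hence operator-norm continuity on $\Gamma_3$ and on the open Hausdorff pieces $\Gamma_{2,\varepsilon,\sigma}$.

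The hard part will be the decay as $O\to\infty$, together with matching the analytic statement $\|F_O\|_{L^2}\to0$ to the topological notion of leaving every compact subset of $\widehat G$ on the non-Hausdorff spectrum. I would argue that a sequence tending to infinity in $\widehat G$ has, after passing to a subsequence, all terms in a single stratum and tending to infinity there. On $\Gamma_3$ this forces $|\rho|\to\infty$ or $\lambda$ to leave compact subsets of $\R^*$, the remaining possibility $\lambda\to0$ with $\rho\lambda$ bounded being excluded since by Proposition \ref{limitGamma3toGamma2} such a sequence converges in $\widehat G$; since $F_{\rho,\lambda}(u,x)$ is the Fourier transform in $t$ of a compactly supported smooth function evaluated at $\rho$, rapid decay of that transform together with control of the support in $(u,x)$ gives $\|F_{\rho,\lambda}\|_{L^2}\to0$, while $|\lambda|\to\infty$ is handled by the support constraint in the fourth argument of $\hat F^{3,4}$. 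On $\Gamma_2$, infinity means $|\mu|\to\infty$ (by Proposition \ref{omiszero}, $\mu\to0$ would converge in $\widehat G$), and Schwartz decay of $\hat F^{2,3,4}$ in its second argument forces the Hilbert--Schmidt norm of \eqref{kerfunc mu nu} to $0$ by dominated convergence. I expect this uniform decay estimate, and the careful identification of ``$O\to\infty$'', to be the only genuinely delicate point.
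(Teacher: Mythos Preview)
Your approach is exactly what the paper's one-line proof invites, and the continuity parts as well as the decay on $\Gamma_0$, $\Gamma_1$, $\Gamma_2$ and the case $|\lambda|\to\infty$ on $\Gamma_3$ are fine. There is, however, a genuine gap in your decay argument on $\Gamma_3$.

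The problem is the regime $\lambda_k\to 0$ with $|\rho_k\lambda_k|\to\infty$, which you correctly identify as a case of going to infinity in $\widehat G$ and which you subsume under ``$|\rho_k|\to\infty$''. You propose to use that $F_{\rho,\lambda}(u,x)$ is the Fourier transform in $t$ of a $C_c^\infty$ function, together with ``control of the support in $(u,x)$'', to get $\|F_{\rho,\lambda}\|_{L^2}\to 0$. But neither ingredient is uniform as $\lambda\to 0$: from $|e^tu-x|\le M$ and $|\tfrac{\lambda}{2}(e^tu+x)|\le M$ one only gets $|e^tu|,|x|\lesssim 1/|\lambda|$, so the support of the kernel in $(u,x)$ has area $\sim 1/|\lambda_k|\to\infty$; and each integration by parts in $t$ produces a factor $e^tu\,\partial_2\hat F^{3,4}$, which on that support is of size $\sim 1/|\lambda|$. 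Combining $N$ integrations by parts with the area bound gives $\|F_{\rho,\lambda}\|_{L^2}^2\lesssim C_N/((\rho\lambda)^{2N}|\lambda|)$, which need not tend to $0$ (take $\lambda_k=e^{-k}$, $\rho_k=k e^{k}$).

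The fix is to abandon the Hilbert--Schmidt norm in this case and estimate the operator norm directly via Schur's test. After one integration by parts you get
\[
|F_{\rho,\lambda}(u,x)|\le \frac{C}{|\rho\lambda|}\int e^{t/2}\tilde\varphi(t)\,\tilde\varphi(e^tu-x)\,\tilde\varphi\!\big(\tfrac{\lambda}{2}(e^tu+x)\big)\,dt
\]
for a suitable majorant $\tilde\varphi\in C_c(\R)$, because on the support $|e^tu|\le 2M/|\lambda|$ while $|\lambda e^tu|$ stays bounded. Integrating this in $x$ (change $w=e^tu-x$) and in $u$ (change $w=e^tu$) gives $\sup_u\int|F_{\rho,\lambda}|\,dx\le C/|\rho\lambda|$ and $\sup_x\int|F_{\rho,\lambda}|\,du\le C/|\rho\lambda|$, hence $\|\pi_{\rho_k,\lambda_k}(F)\|_{\mathrm{op}}\le C/|\rho_k\lambda_k|\to 0$. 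This closes the gap and the rest of your argument goes through.
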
 

\begin{proof} It suffices to consider only $L^1 $-functions $F $ for which the partial Fourier transforms $\hat F^{3,4} $ are contained in $C_c^\iy(\R^4) $, since this space is dense in $C^*(G) $. The expression $(2) $ tells us that the operators 
$\pi_{\ell}(F)$, $\ell \in \GA_3$, are Hilbert-Schmidt. Indeed for those functions $F $, the kernel functions $F_{\rh,\la}(u,x) $ are continuous in the parameters $\rh,\la,u,x $ and have compact support in  $u,x\in\R $ and $\val \la> c$ (for some fixed positive real number $c$). Hence for any converging sequence $(\ell_k)_k\subset \GA_3$ with limit $\ell \in \GA_3$, the operators $\pi_{\ell_k}(F) $ converge in the Hilbert-Schmidt norm to the operator $\pi_{\ell}(F) $. The cases in $\GA_2, \GA_1$ and $\GA_0$ are treated in \cite{Lin-Lud}.

If now we have that $\limk O_k=\iy $ in $\GA_3 $,  then according to Proposition~\ref{limtwothree} we have that $\lim_{k\to\iy} \rh_k 
\la_k=\iy$. If $\limk \la_k=\iy $, then  $\pi_{\rh_k,\la_k}(F)=0 $ for $k $ large enough, and if $\limk \rh_k=\iy  $, then $\vert F_{\rh_k,\la_k}(x,u)\vert \leq \frac{1}{ \vert{\rh_k }\vert }\ps(x-u)$ for some positive $\ps\in C_c(\R)$ and any $x,u\in\R $, hence $\limk \noop{\pi_{\rh_k,\la_k}(F)}=0$. In $\GA_0, \GA_1$ and $\GA_2$, the property of sequences of coadjoint orbits tending to infinity has been described in \cite{Lin-Lud}.
\end{proof}

\subsection{An approximation of $\pi_{\rh_k,\la_k}(F) $}\label{approx}$ $

Let $(\pi_{\rh_k,\la_k} )_k$ be  a properly converging sequence in $\wh G$ such that $\limk \la_k= 0 $ and that $\limk \la_k\rh_k= \om \in \R $. We can assume (by passing to a subsequence if necessary) that $\la_k= \ve \val{\la_k} $ for every $k\in\N$. 

 \begin{definition}\label{R_k Mde def}
\rm Let $c\leq d\in\lbrack -\iy,\infty \rbrack $ and $\de> 0 $. We define a family of multiplication operators on the space $\l2\R$ or $\l2{\R_{\pm},\frac{dx}{ \vert x \vert }}$: for $s\in\R$ and $\xi\in L^2(\R)$ (or $\l2{\R_{\pm},\frac{dx}{ \vert x \vert }}$),
\begin{eqnarray*}
M_{\{c\leq d\}}(\xi)(s)\nn  
&:= &
1_{ [ c, d]}(s)\xi(s),\\ 
M_{\{\leq d\}}(\xi)(s)\nn 
&:= &
1_{ [ -\iy, d]}(s)\xi(s),\\  
M_{\{ d\leq \}  }(\xi)(s)\nn  
&:= &
1_{\lbrack d,\iy \rbrack}\xi(s),\\  
 M_{\{\val{}\leq \de\}}(\xi)(s)
 &:=&
 1_{{[ -\de,\de]}}(s)\xi(s), \\
 \nn \text{resp. }M_{{\{\de\leq \val{}}\}}(\xi)(s) 
 & :=&
1_{\lbrack -\iy,-\de \rbrack\cup \lbrack \de,\iy \rbrack}(s)\xi(s).
\end{eqnarray*}
 \end{definition}

\begin{lemma}\label{limk de=0rk to iy}
Suppose that $\limk \la_k= 0 $. Take a sequence $(R_k)_k $ in $\R_+ $ such that 
 \begin{eqnarray}
\nn \limk R_k= +\iy.
 \end{eqnarray}
Then we have that 
\begin{eqnarray}\label{small to 0}
 \limk \noop{V_k^*\circ \pi_{\rh_k,\la_k}(a)\circ V_{k }\circ M_{\{   R_k \leq 
\vert{ }\vert\} }}= 0, \quad a\in C^*(G).
 \end{eqnarray}
 \end{lemma}

 \begin{proof}
By (\ref{kernel pirhola}), it suffices to show that for $F\in \l1{G} $ with $\hat F^{3,4}\in C_c(\R^4) $ we have 
\begin{eqnarray}\label{hat F 34 is 0}
 \hat F^{3,4}\big(t,\ve\frac{e^ts}{\la_k}-\frac{\ve x}{\la_k},-\frac{1}{2}(\ve x+\ve{e^ts}{}), \la_k\big)= 0,
 \end{eqnarray}
for $\vert x\vert\geq R_k $ and $k $ large enough. Now if $s, x$ have the same sign, we have that $\vert \ve x+\ve{e^ts} \vert \geq R_k $ and thus (\ref{hat F 34 is 0}) is satisfied. Similarly, if $s $ and $x $ have different signs, it follows that $\vert \frac{e^ts}{\la_k}- \frac{x}{\la_k}\vert\geq \frac{R_k}{\val{\la_k}} $.
\end{proof}

\begin{lemma}\label{limk de=0}
Suppose that $\limk \la_k= 0 $ and $\limk{ {\rh_k\la_k}}=\om\in\R^*$. Take a sequence $(R_k)_k $ in $\R_+ $ such that  
\begin{eqnarray}\label{Rk conditions 1- nonzero om}
\limk R_k= +\iy \ \ \text{and} \ \  \limk R_k\vert \la_k\vert = 0.
 \end{eqnarray}
 Then we have that 
  \begin{eqnarray}\label{small to 0}
 \limk \noop{\pi_{\rh_k,\la_k}(a)\circ V_{k }\circ M_{\{ \vert{ }\vert\leq \vert R_k\la_k\vert\} }}= 0, \quad a\in C^*(G).
 \end{eqnarray}
 \end{lemma}

\begin{proof} 
We prove the lemma first for $F\in\l1G $ such that $\hat F^{3,4}\in C_c^{\iy}(\R^4) $. Let $M_{I_k}:= M_{\{\val{}\leq R_k\val{\la_k}\}}$ be the multiplication operator on $L^2{(\R,\frac{dx} {|x|}) }$ with the characteristic function of the interval $I_k:=[-R_k \vert{\la_k }\vert, R_k \vert{\la_k }\vert ] $. Applying the relation (\ref{vkstar circ Vk}) with sequences $(\rho_k)_k$ and $(\la_k)_k$ in $\R$, the operator $\pi_{\rh_k,\la_k}(F)\circ V_{k }\circ M_{I_k} $ is a kernel operator with kernel $H_k(s,x)1_{I_k}(x)$, and 
\begin{eqnarray*}\label{}
 \nn  \vert{H_k(s,x)1_{I_k}(x)}\vert \leq 1_{I_k}(x)\frac{1}{ \vert{\rh_k\la_k }\vert }\psi(\la_k)\int_\R\psi(t)\psi(\frac{x-e^ts}{\la_k})\psi(x+e^ts)dt,
 \end{eqnarray*}
for $s, x\in \R$ and $k\in\N$, where $\psi$ is a positive continuous function with compact support on $\R$ given in \eqref{Hintro}. 
Now for any $C>0 $ large enough, we have that $ \vert{\frac{x-e^ts}{\la_k}}\vert \geq R_k $ if $t\in\supp(\psi)$, $ \vert{x }\vert \leq R_k 
\vert{\la_k}\vert$, and $\vert{s }\vert \geq CR_k \vert{\la_k }\vert$. Therefore, it follows that  
\begin{eqnarray*}\label{}
 \nn \sup_{x\in I_k}\int_{\R} \vert H_k(s,x)\vert ds &\leq &
 \frac{1}{ \vert{\rh_k\la_k }\vert }\no{\psi}_\iy^2\sup_{x\in I_k}\int_{\R} \int_\R \psi(t)\psi(\frac{x-e^ts}{\la_k})dtds\\
\nn  
&= &
\frac{1}{ \vert{\rh_k\la_k }\vert }\no{\psi}_\iy^2\sup_{ x\in I_k}\int_{ C I_k} \int_\R \psi(t)\psi(\frac{x-e^ts}{\la_k})dtds\\
\nn  
&\leq &
C' R_k  \vert{\la_k }\vert 
 \end{eqnarray*}
for a new constant $C'>0 $ (independent of $k $).
Furthermore,
 \begin{eqnarray*}\label{}
 \nn \sup_{s\in \R}\int_{\R} \vert H_k(s,x)\vert 1_{I_k}(x)dx &\leq &
 \frac{1}{ \vert{\rh_k\la_k }\vert }\no{\psi}_\iy^2\sup_{s\in \R}\int_{I_k} \int_\R \psi(t)\psi(\frac{x-e^ts}{\la_k})dtdx\\
\nn  
&\leq&
C'' R_k  \vert{\la_k }\vert 
 \end{eqnarray*}
 for another constant $C''> 0$. Hence, Young's condition tells us that 
 \begin{eqnarray*}\label{}
 \nn \limk \noop{\pi_{\rh_k,\la_k}(F)\circ V_{k }\circ M_{\{ \vert{ }\vert\leq \vert R_k\la_k\vert\} }}= 0.
 \end{eqnarray*}
Now if we take any $a\in C^*(G) $, for every $\ve>0 $ there exists an $F_\ve $ in $L^1(G) $ with the properties above such that $\no{a-F_\ve}_{C^*(G)}<\ve $, and then there exists $N_\ve\in \N $ such that $\noop{\pi_{\rh_k,\la_k}(F_\ve)\circ V_{k }\circ M_{I_k}}<\ve$ for $k> N_\ve$. Hence $\noop{\pi_{\rh_k,\la_k}(a)\circ V_{k }\circ M_{I_k}}<2\ve$ for $k>N_\ve $.
 \end{proof}
 
\subsection{Convergence in operator norm: $\om\ne 0$}$ $
  
In this section, we consider sequences $(\la_k)_k$ and $(\rh_k)_k$ satisfying 
\begin{align*}\label{}
 \limk \la_k= 0 \text{ and} 
 \limk \om_k= \limk \la_k\rh_k= \om\ne 0.
 \end{align*}
 Recall the equivalent representations $\ta^{\pm}_{\mu, \nu}$ given in \eqref{ta mu nu}. We have the following lemma. 

 \begin{lemma}\label{difference operator lak rhk ta} 
 Let $(\pi_{\rh_k,\la_k} )_k$ be  a properly converging sequence in $\wh G $ 
such that $\la_k=\ve  \vert{\la_k }\vert, k\in\N$, $\limk \la_k= 0 $ and that 
$\limk \om_k:= \limk \la_k\rh_k= \om\in\R^* $. Take a sequence $(R_k)_k $ in 
$\R_+ $ such that  $\limk R_k= \iy $ and 
\begin{eqnarray}\label{Rk conditions 2- nonzero om}
  \limk R_k\vert \la_k\vert = 0,\ \limk ({R_k^2}{\vert \la_k}\vert )= \iy, 
 \end{eqnarray}
equivalently, $\limk \frac{\rh_k}{R_k^2}= 0$. Then for any $F\in \l1 G$ with $\hat F^{3,4}\in C_c(\R^4) $, we have  
that 
\begin{eqnarray*}\label{limit -om+} 
 (a)& &
 \noop{\pi_{\rh_k,\la_k}(F)\circ V_{k}\circ M_{\{\geq \val{\la_k}R_k\}}- V_k\circ 
\ta^{+}_{\ve\om_k,-\ve}(F) \circ M_{\{\geq \val{\la_k}R_k\}} }\leq C\big(\frac{ 
\vert \om_k \vert }{\val{R_k^2\la_k}}+ R_k^{-1/2}\big),
\\
 (b) & &
  \noop{\pi_{\rh_k,\la_k}(F)\circ V_{k}\circ M_{\{\leq -\val{\la_k}R_k\}}- V_k\circ 
\ta^{-}_{-\ve\om_k,\ve}(F) \circ M_{\{\leq -\val{\la_k}R_k\}} }\leq 
C\big(\frac{ 
\vert \om_k \vert }{\val{R_k^2\la_k}}+ R_k^{-1/2}\big),
\end{eqnarray*}
 for some constant $C$ depending on $F $.
 \end{lemma}

\begin{proof} 
Take any $F\in \l1 G$ such that $\hat F^{3,4}\in C_c(\R^4) $.  Then $\hat 
F^{3,4}(t,s,x,\la)=0 $ for any $t,s,x\in\R $ such that $ \vert{t }\vert + \vert{s }\vert +\vert x\vert + \vert{\la}\vert \geq D $ for some $D>0 $. Take $C_1> 0 $ such 
that $ e^D C_1< 1$. 

Let $s, t, x\in \R$ such that $\val t\leq D, \val x\leq D$ and $\val s\geq C_1\val{ R_k\la_k}$. We have that
 \begin{eqnarray}\label{ln(s+small)}
 \ln(\vert  e^ts-\la_k x\vert )-\ln(\vert e^t s\vert )+{\la_k x}( e^ts)\inv  
&=& -\la_k x \int_{0}^{1}\big(\frac{1}{ e^ts-\la_k x v }-\frac{1}{ e^ts 
}\big)dv\\
\nn  &= &
\frac{-(\la_k x)^2}{e^t s} \int_{0}^{1}\frac{v dv}{ e^ts-\la_k x v } \\ 
 \nn  &=: &   \la_k{c_k(t,s, x)},
 \end{eqnarray}
where
\begin{eqnarray*}
 c_k(t,s,x)= \frac{-\la_k x^2}{e^t s} \int_{0}^{1}\frac{ vdv}{ e^ts-\la_k x v }.
 \end{eqnarray*}
Hence for $k $ large enough and for some new constant $C= \frac{D^2}{2 C_1^2}$, we have:
\begin{eqnarray*}
 \vert \ln(\vert e^ts-\la_k x\vert )-\ln(\vert e^t s\vert )+{\la_k x}\vert 
e^ts\vert \inv  \vert &=&
 \vert  \la_k c_k(t,s,x)  \vert \\ 
 \nn  &\leq & C\frac{\la_k^2}{s^2} \leq C\frac{1}{  R_k^2},
 \end{eqnarray*}
for all our $s, x, t $ in $\R$. Whence
\begin{eqnarray}\label{one minus e ick} 
 \vert e^{i\rh_k\la_kc_k(t,s,x)}-1\vert &\leq &\val{\rh_k\la_kc_k(t,s,x)}\\
\nn  &\leq &
\frac{C\val{\rh_k\la_k}}{\val{\la_k R_k^2}}.
\end{eqnarray}
By (\ref{vkstar circ Vk})  we have that 
\begin{eqnarray*}
 V_{k}^*(\pi_{\rh_k,\la_k}(F)(V_{k}(\et)))(s)
&= &
 \frac{e^{-i\rh_k\ln(\val s)}\val{s}^{1/2}}{ \vert{\la_k }\vert 
}{}\int_\R \int_\R  e^{-i\rh_k t}{e^{t/2}}\hat 
F^{3,4}(t,\frac{-\ve}{\la_k}( x-{e^ts}),\frac{-\ve}{2}( 
x+{e^ts}),\la_k)dt\\
 \nn & & \frac{e^{i\rh_k\ln(\val{x})}}{\val{ x 
}^{1/2}}\et( x)dx \ \quad \text{for } \, \,  \et\in L^2(\R, \frac{dx}{\val x}).
 \end{eqnarray*}
It follows for our constant $C_1 $ that for any  
  $\et\in L^2(\R,\frac{dx}{ \vert{x }\vert}) $ with $\et(x)= 0 $,  
$\val x\leq  R_k \vert{\la_k }\vert $ and $k $ large enough, 
 \begin{eqnarray*}\label{}
\hat F^{3,4}(t,\frac{-\ve}{\la_k}( x-{e^ts}),\frac{-\ve}{2}( 
x+{e^ts}),\la_k)\et( x)=0
 \end{eqnarray*}
 for every $t,x,\la_k\in\R$ and $\vert{s }\vert< C_1 R_k \vert{\la_k }\vert $. 
For $\val s\geq C_1R_k\val{\la_k} $, we have that 
 \begin{eqnarray*}
 \nn  & &
\big| V_{k}^*(\pi_{\rh_k,\la_k}(F)(V_{k }(\et)))(s)\\
 \nn  & &- \int_\R \int_\R   \hat F^{3,4}(t,x,\frac{\la_k}{2}x-\ve{e^ts}{},\la_k) 
\frac{\val{e^{t}s}^{1/2}e^{-i\ve\rh_k{\la_k x}( e^ts)\inv }}{\val{-\ve\la_k x+ 
e^t  s }^{1/2}}\et(- \ve\la_k x+ e^t s )dxdt \big|\\
\nn  &= & \big|\int_\R \int_\R \hat 
F^{3,4}(t,x,\frac{\la_k}{2}x-\ve{e^ts}{},\la_k)\val{e^{t}s}^{1/2}\\
 \nn & &
\big(\frac{e^{-i\ve\rh_k{\la_k x}(\vert e^ts\vert )\inv + 
i\ve\rh_k\la_k{c_k(t,s,\ve x)}}-e^{-i\ve\rh_k{\la_k x}(\vert e^ts\vert )\inv 
}}{\val{-\ve\la_k x+  e^t  s }^{1/2}}\big) \et(-\ve\la_k x+e^t s ) dxdt\big| \\
\nn  &\leq &
\int_\R \int_\R  \vert \hat F^{3,4}(t,x,\frac{\la_k}{2}x-\ve{e^ts},\la_k)\vert 
\frac{\vert e^{i\ve\rh_k\la_k{c_k(t,s,\ve x)}}-1\vert}{\val{-\ve\la_k x+e^t  
s}^{1/2}}\val{e^{t}s}^{1/2}\vert \et(-\ve\la_k x+e^t s )\vert dxdt\\
\nn  &\overset{(\ref{one minus e ick})}{\leq}& 
\frac{C \vert \rh_k \la_k\vert }{\val{\la_k R_k^2}}\int_\R \int_\R  \vert \hat 
F^{3,4}(t,x,\frac{\la_k}{2}x-\ve{e^ts},\la_k)\vert \frac{1 }{\val{-\ve\la_k x+  
e^t  s }^{1/2}}\val{e^{t}s}^{1/2}{\vert \et(-\ve\la_k x+e^t s )\vert} dxdt.
 \end{eqnarray*}
Therefore, for any $\et\in L^2(\R,\frac{dx}{ \vert{x }\vert}) $ with $\et(x)= 0 
$ for $\val x\leq  R_k \vert{\la_k }\vert $ and $k $ large enough, we deduce 
that 
\begin{eqnarray*}
 \nn  & &
\int_{\{\val s\geq C_1 R_k\val{\la_k}\}}\val{ 
V_{k}^*(\pi_{\rh_k,\la_k}(F)(V_{k}(\et)))(s)\\
 \nn  & &- \int_\R \int_\R  \hat F^{3,4}(t,x,\frac{\la_k}{2}x-\ve{e^ts},\la_k) 
\frac{\val{e^{t}s}^{1/2}e^{-i\ve\rh_k{\la_k x}( e^ts)\inv }}{\val{-\ve\la_k x+ 
e^t  s }^{1/2}}\et(-\ve\la_k x+e^t s )dxdt}^2\frac{ds}{\vert s\vert }\\
\nn  &\leq &
\int_{\{\val s\geq C_1 R_k\val{\la_k}\}}{\Big(\frac{C  \vert{\rh_k 
\la_k}\vert}{\val{\la_k R_k^2}}\int_\R \int_\R  \vert \hat 
F^{3,4}(t,x,\frac{\la_k}{2}x-{{\ve e^ts}},\la_k)\vert \frac{1 
}{\val{-{\ve\la_k 
x}+  e^t  s}^{1/2}}}\val{e^{t}s}^{1/2}\vert {\et(-\ve\la_k x+e^t s )} \vert 
dxdt\Big)^2\frac{ds}{\val s}\\
\nn  &\leq &
(\frac{C \vert \rh_k \la_k\vert }{\val{\la_k R_k^2}})^2\int_{\{\val s\geq C_1 
R_k\val{\la_k}\}}{\int_\R \int_\R  \vert e^{t/2}\hat F^{3,4}(t,x, 
\frac{\la_k}{2}x-\ve{e^ts},\la_k)\vert \frac{1 }{\val{-\ve\la_k x+  e^t  s 
}}}\vert \et(-\ve\la_k x+e^t s )\vert^2 dxdtds\\
\nn  & &
\cdot \sup_{\val s\geq C_1 R_k\val{\la_k}}\int_\R\int_\R e^{r/2}\vert \hat 
F^{3,4}(r,u,\la_k u-\ve e^rs,\la_k)\vert dudr.
 \end{eqnarray*}
Choose a positive real-valued function $\va\in C_c^{\iy}(\R, \R_+)$ such that
\begin{eqnarray*}
 \val{e^{t/2}\hat F^{3,4}(t,x,u,\la_k)}\leq \va(t)\va(x)\va(u) \, \text{ for } 
\, k\in\N \, \text{ and} \ u, t, x\in \R.
 \end{eqnarray*} 
Then we obtain the inequality: 
\begin{eqnarray}\label{difference one}
  & &
\int_{\{\val s\geq C_1 R_k\val{\la_k}\}}\val{V_{k}^*(\pi_{\rh_k,\la_k}(F)(V_{k}(\et)))(s)\\
 \nn  & &- \int_\R\int_\R  \hat F^{3,4}(t,x,\frac{\la_k}{2}x-\ve{e^ts},\la_k) 
\frac{\val{e^{t}s}^{1/2}e^{-i\ve\rh_k{\la_k x}( e^ts )\inv }}{\val{-\ve\la_k x+ 
e^t s}^{1/2}}\et(-\ve\la_k x+e^t s )dxdt}^2\frac{ds}{\vert s\vert }\\
\nn  &\leq &
(\frac{C \vert \rh_k \la_k\vert }{\val{\la_k  }
R_k^2})^2
\int_\R \int_\R \int_\R e^{-t} \va(t)\va(x) \frac{1 }{\val{  s }}\vert \et(e^t s)\vert^2 dxdtds\\
\nn  &\leq &
(\frac{C \vert \rh_k \la_k\vert }{\val{\la_k R_k^2}})^2\no\et^2_2,
 \end{eqnarray}
for some new constant $C> 0 $ depending on $F $.

 Now
 \begin{eqnarray*}
  e^{t} s-\ve\la_k x= e^{t} s\al_k(t,s,x)=e^{t+\ln(\al_k(t,s,x))}s,
 \end{eqnarray*}
where
\begin{eqnarray*}
 \al_k(t,s,x)= 1- \frac{\ve\la_k x}{ e^{t}s}.
 \end{eqnarray*}
If $\val {s}\geq C_1 \val{\la_k}R_k $ for the above constant $C_1$,  $k$ large enough 
and for all $\val t\leq D, \val x\leq D$, it follows that 
\begin{eqnarray}\label{al k estimated}
\nn \vert \frac{\la_kx}{e^ts}\vert &\leq& \frac{e^D \vert \la_k \vert D}{C_1 \vert \la_k 
\vert R_k }= \frac{D}{C_1  R_k },\\ 
\vert \ln(\al_k(t,s,x))\vert &\leq& C\frac{1}{R_k}, \, \text{and} \\
\nn \val{\partial_t \ln(\al_k(t,s,x))}&\leq& C\frac{1}{R_k},
 \end{eqnarray}
for some (new) constant $C> 1$.

Let \begin{eqnarray*} 
 r= \ta_k(t) &:= &t+\ln(\al_k(t,s,x)), \\
 \nn \ta\inv_k(r)= \mu_k(r) & =& t \quad \ \text{for} \quad \val t\leq D, \val x\leq D, \val{s}\geq  C_1 \val{\la_k R_k},
\end{eqnarray*}
we have that $\val{\mu_k(r)-r}= \val{\ln(\al_k(t,s,x))}\leq C\frac{1}{R_k}$. Then
\begin{eqnarray*} 
  & &\int_{\R} \int_\R  \hat F^{3,4}(t,x,\frac{\la_k}{2}x-\ve{ e^ts}{},\la_k)
\frac{\val{e^{t}s}^{1/2}e^{-i\ve\rh_k{\la_k x}( e^ts )\inv }}{\val{-\ve\la_k x+ 
 e^t  s }^{1/2}}\et(-\ve\la_k x+e^t s )dxdt\\ 
&= &
\int_{\R}  \int_\R  \hat F^{3,4}(\mu_k(r),x, e^{r}s,\la_k) 
\frac{\val{e^{\mu_k(r)}s}^{1/2}e^{-i\ve\rh_k{\la_k x}( 
e^{\mu_k(r)}s)\inv}}{\val{e^{r}  s }^{1/2}}\et(e^{r} s )dx\mu'_k(r)dr.
\end{eqnarray*}

Now by (\ref{al k estimated}), for $k $ large enough, we have
\begin{eqnarray*}
   & &\vert \hat F^{3,4}(\mu_k(r),x,{ 
e^rs},\la_k){\val{e^{\mu_k(r)}s}^{1/2}e^{-i\ve\rh_k{\la_k x}( e^{\mu_k(r)}s 
)\inv}}\mu'_k(r)\\  
 & & \quad - \hat F^{3,4}(r,x,{e^rs}{},0){\val{e^{r}s}^{1/2}e^{-i\ve\rh_k{\la_k 
x}( e^rs)\inv }}\vert \\
  &\leq &
\frac{C}{R_k}\va(r)\va(x)\val s^{1/2},
\end{eqnarray*}
where $\va\in C_c^{\iy}(\R, \R_+)$ is given before \eqref{difference one}.
Therefore for $k $ large enough and any $\et\in \l2{ \R, \frac{ds}{\val s}} $ 
with $\|\et\|_2\leq 1 $ and $\et(u)= 0 $ for $\vert  u\vert \leq R_k 
\vert{\la_k}\vert $, we have that
\begin{eqnarray}\label{first approx}
  &  &
\int_{\R}\big| \int_{\R} \int_\R  \hat 
F^{3,4}(t,x,\frac{\la_k}{2}x-\ve{e^ts}{},\la_k) 
\frac{\val{e^{t}s}^{1/2}e^{-i\ve\rh_k{\la_k x}( e^ts )\inv }}{\val{-\ve\la_k x+ 
e^t  s }^{1/2}}\et(-\ve\la_k x+e^t s )dxdt\\ 
\nn  & &- \int_{\R}  \int_\R  \hat F^{3,4}(r,x,-{ 
e^rs}{},0){e^{-i\ve\rh_k{\la_k 
x}( e^{r}s )\inv }}{}\et( e^r s )dxdr{\big|}^{2}\frac{ds}{\val s }\\
  \nn&= &
  \int_\R\big| \int_{\R}  \int_\R  \hat F^{3,4}(\mu_k(r),x,-e^r s,\la_k) 
\frac{\val{e^{\mu_k(r)}s}^{1/2}e^{-i\ve\rh_k{\la_k x}( e^{\mu_k(r)}s 
)\inv}}{\val{e^r  s }^{1/2}}\et( e^r s )dx\mu'_k(r)dr\\
\nn  & &- \int_{\R}  \int_\R  \hat F^{3,4}(r,x,{ 
-e^rs}{},0){e^{-i\ve\rh_k{\la_k 
x}( e^{r}s )\inv }}{}\et( e^r s)dxdr\big|^{2}\frac{ds}{\val s}\\
\nn  &  \leq&
\frac{C}{R_k},
 \end{eqnarray}
for a new constant $C> 0$ depending on $F$.

Let us recall that for $\et\in L^2(\R_\si, \frac{dx}{\val x}) $ (see \eqref{ta mu nu}),
  \begin{eqnarray*}\label{ta muu nuu }
 \ta_{\mu,\nu}^{\si}(F)\et(u)=
\int_\R \hat F^{2,3,4}(t,\mu \si(e^{t}u)\inv ,\si\nu ( e^tu),0)\et(e^tu)dt, \ u\in \R_\si. 
 \end{eqnarray*}
Hence  for any $\et\in \l2{\R_+, \frac{ds}{\val s}} $, we have that
\begin{eqnarray*} 
  & &\Vert{V_{k}^*\circ\pi_{\rh_k,\la_k}(F)\circ V_{k}\circ M_{\{\geq\val 
{R_k\la_k}\}}(\et)-\ta^{+}_{\ve\om_k,- \ve}(F)\circ M_{\{\geq 
\val{\la_k}R_k\}}(\et)}\Vert _2\\
  \nn  &= & 
  \Big(\int_\R \vert V_{k}^*\circ\pi_{\rh_k,\la_k}(F)\circ V_{k}\circ 
M_{\{\geq\val {R_k\la_k}\}}(\et)(s)\\
\nn  & &- \int_\R \hat F^{2,3,4}(t,\ve\om_k s\inv e^{-t}, -s e^t,0)M_{\{\geq 
\val{\la_k}R_k\}}(\et)( e^ts)dt \vert^{2}\frac{ds}{\val s}\Big)^{1/2}\\
  \nn  &= &
  \Big(\int_\R \vert V_{k}^*\circ\pi_{\rh_k,\la_k}(F)\circ V_{k}\circ 
M_{\{\geq\val {R_k\la_k}\}}(\et)(s)\\
\nn  & &- \int_\R \int_\R\hat F^{3,4}(t,x,-{ e^ts},0) {\val{e^{t}s}^{1/2}e^{-i 
\ve\rh_k{\la_k x}( e^ts )\inv }}M_{\{\geq \val{\la_k}R_k\}}(\et)( e^ts)dtdx 
\vert ^{2}\frac{ds}{\val s}\Big)^{1/2}\\
 \nn  &\overset{(\ref{first approx})}\leq &
  (\frac{C }{R_k})^{1/2} \no\et_2+ \Big(\int_\R \vert 
V_{k}^*\circ\pi_{\rh_k,\la_k}(F)\circ V_{k}\circ M_{\{\geq\val 
{R_k\la_k}\}}(\et)(s)\\
\nn  & &- \int_\R \int_\R  \hat F^{3,4}(t,x,\frac{\la_k}{2}x-\ve{ e^ts}{}),0) 
\frac{\val{e^{t}s}^{1/2}e^{-i\ve\rh_k{\la_k x}( e^ts )\inv }}{\val{-\ve\la_k x+ 
e^t  s }^{1/2}}M_{\{\geq R_k\val{\la_k}\}}(\et)(-\ve\la_k x+ e^t s)dxdt\vert 
^2\frac{ds}{\vert s\vert }\Big)^{1/2}\\
\nn  &\leq &
  (\frac{C }{R_k})^{1/2} \no\et_2+ \Big(\int_\R \vert \int_\R \int_\R  \hat 
F^{3,4}(t,x,\frac{\la_k}{2}x-\ve{ e^ts}{},0)\\
 \nn & &
\frac{\val{e^{t}s}^{1/2}e^{-i\ve\rh_k{\la_k x} (e^ts)\inv + 
i\rh_k\la_k{c_k(t,s,\ve x)}}}{\val{-\ve\la_k x+ e^t  s}^{1/2}}M_{\{\geq\val 
{R_k\la_k}\}}\et(- \ve\la_k x+  e^t s )dxdt\\
\nn  & &- \int_\R \int_\R  \hat F^{3,4}(t,x,\frac{\la_k}{2}x-\ve{ e^ts}{}),0) 
\frac{\val{e^{t}s}^{1/2}e^{-i\ve\rh_k{\la_k x}( e^ts )\inv }}{\val{-\ve\la_k x+ 
e^t  s }^{1/2}}M_{\{\geq R_k\val{\la_k}\}}\et(-\ve\la_k x+ e^t 
s)dxdt\vert^2\frac{ds}{\vert s\vert }\Big)^{1/2}\\
\nn  &\leq &
  (\frac{C }{R_k})^{1/2} \no\et_2+ \Big(\int_\R \vert \int_\R \int_\R  \hat 
F^{3,4}(t,x,\frac{\la_k}{2}x-\ve{ e^ts}{},0)\\
 \nn & & 
 \Big\vert e^{i\ve\rh_k\la_k{c_k(t,s,\ve x)}}-1\Big\vert 
\frac{\val{e^{t}s}^{1/2}e^{-i\ve\rh_k{\la_k x} (e^ts)\inv}}{\val{-\ve\la_k x+ 
e^t  s }^{1/2}} M_{\{\geq\val{R_k\la_k}\}}\et(-\ve \la_k x+  e^t s )dxdt 
\vert^2\frac{ds}{\vert s\vert }\Big)^{1/2}\\
\nn  &\overset{(\ref{one minus e ick})}\leq &
C(\frac{ \vert \rh_k \la_k\vert }{\val{\la_k 
R_k^2}}+R_k^{-1/2})\no\et_2,
\end{eqnarray*}
for a constant $C> 0 $.  Then for any  $s\in\R $, it follows that
\begin{eqnarray*}
\nn  & &
C(\frac{ \vert \om_k \vert }{\val{R_k^2\la_k}}+ R_k^{-1/2})\\ 
\nn  &{\geq} &  \noop{ \pi^{\al}_{\rh_k,\la_k}(F)\circ V_{k}\circ M_{\{\geq 
\val{\la_k}R_k\}}- V_k\circ (\ta^{+}_{\ve\om_k,-\ve})^{\al}(F) \circ M_{\{\geq 
\val{\la_k}R_k\}}}\\
\nn  & \overset{(\ref{pilarh s}), (\ref{equi - + s})}{=} &
\noop{S\circ  \pi_{\rh_k,\la_k}(F)\circ  S\circ V_{k}\circ M_{\{\geq 
\val{\la_k}R_k\}}- V_k\circ \ta^{+}_{-\ve\om_k,\ve}(F) \circ M_{\{\geq \val{\la_k}R_k\}} }  \\
\nn  &\overset{(\ref{ScommVk}), (\ref{tamunu circ S})}=&
\noop{  \pi_{\rh_k,\la_k}(F)\circ V_{k}\circ M_{\{\leq -\val{\la_k}R_k\}}- V_k\circ 
\ta^{-}_{-\ve\om_k,\ve}(F) \circ M_{\{\leq -\val{\la_k}R_k\}} }.
\end{eqnarray*}
\end{proof}

Let $(\pi_{\rh_k,\la_k} )_k$ be a properly converging sequence in $\wh G$ such that $\limk \la_k=  0$ and $\limk \la_k\rh_k= \om \in \R^*$. We recall (Proposition~\ref{limtwothree}) that the limit set $L $ of the sequence $(O_{\rh_k,\la_k})_k $ is the two points set
\begin{eqnarray*}
 L= \{O_{-\om,1,0}, O_{\om,-1,0}\}.
 \end{eqnarray*}
For $k\in \N$, let $\la_k= \ve \val{\la_k}$ and $(R_k)_k$ be a sequence in 
$\R_+$ satisfying the conditions given in \eqref{Rk conditions 2- nonzero om}, 
namely, $ \limk R_k\vert \la_k\vert = 0$ and $\limk ({R_k^2}{\vert \la_k}\vert )= 
\iy$.  We define $ \si_{k}^\om$ by 
\begin{align}\label{control om}
\si_{k}^\om(\phi|_L):= V_{k}\circ \Big(\phi(\ta^{+}_{\ve\om_k,-\ve})\circ M_{\{\geq \val{\la_k}R_k\}}\oplus 
{\phi(\ta^{-}_{-\ve\om_k,\ve})}\circ M_{\{\leq-\val{\la_k}R_k\}}\Big) \circ V_k^*, \quad \phi \in l^\iy(\hat G),
 \end{align}
where $\ta^{\si}_{\mu, \nu}$ acting on $L^2(\R_{\si}, \frac{du}{|u|})$ is given in \eqref{ta mu nu def}.
 
 \begin{theorem}\label{lim rhklak is om non0}
Suppose that $ \limk 
\rh_k\la_k = \om\in\R^*$ and $\la_k= \ve \val{\la_k}$ for $k\in\N$. Then
\begin{eqnarray*}
 \limk \noop{\pi_{\rh_k,\la_k}(a)-\si_{k}^\om(\hat a\res{L})}= 0
 \end{eqnarray*}
for every $ a\in C^*(G)$.
 \end{theorem}
 
\begin{proof} 
Apply Proposition~\ref{norm conv} and  Lemma~\ref{difference operator lak rhk ta}.
Let $a\in C^*(G) $ and $\ve> 0$. Choose $F\in L^1(G) $ with $\hat F^{3,4}\in C_c(\R^4)$ such that $\no{a-F}_{C^*(G)}< \ve $. Then by Lemma~\ref{difference operator lak rhk ta}, there exists an $N_\ve\in\N $ such that $\noop{\pi_{\rh_k,\la_k}(F)-\si_{k}^\om(\hat F\res L)}< \ve$ for $k\geq N_\ve $. Hence 
\begin{align*}
\noop{\pi_{\rh_k,\la_k}(a)-\si_{k}^\om(\hat a\res{L})}< 2\ve \quad \text{for } \, k\geq N_\ve. 
\end{align*}
\end{proof}  

\subsection{Convergence in operator norm: $\om=0 $}\label{om is  0}$ $ 

Suppose now that
\begin{eqnarray*}
\limk \la_k= 0= \limk \la_k\rh_k.
 \end{eqnarray*}
We can again assume that $\la_k= \ve \val{\la_k}$ for $k\in\N $. By Proposition~\ref{limtwothree}, the limit set $L $ of the sequence of 
representations $(\pi_{\rh_k,\la_k})_k$ is equal to $\GA_1\cup\GA_0 $. We first show that a similar convergence as the one in Lemma~\ref{limk de=0} holds when $\om= 0$, but with a slightly different sequence $(R_k)_k$ in $\R_+$.

\begin{lemma}\label{limk de=0 and more}
Suppose that $\limk \la_k= 0 $. Take a sequence $(R_k)_k $ in $\R_+ $ such that  
\begin{eqnarray}\label{Rk conditions 1- zero om}
\limk R_k= +\iy, \ \limk R_k^{2}\vert \la_k\vert = 0.
 \end{eqnarray}
 Then we have that 
  \begin{eqnarray}\label{small to 0}
 \limk \noop{\pi_{\rh_k,\la_k}(a)\circ V_{k }\circ M_{\{ \vert{ }\vert\leq \vert 
R_k\la_k\vert\} }}= 0,\quad a\in C^*(G).
 \end{eqnarray}
 \end{lemma}
 
\begin{proof} 
It suffices to consider $F\in \l1 G $ such that $\hat F^{3,4}\in C_c^\iy(\R^4)$. There exists $M> 0 $ such that 
\begin{eqnarray*}
\hat F^{3,4}({ t},x,y,\la)= 0
\end{eqnarray*}
whenever $ \vert{t }\vert+ \vert{x }\vert+ \vert{y }\vert+ \vert{\la }\vert\geq M $. We have  that
\begin{eqnarray}\label{vk star pik Vk}
V_{k}^*(\pi_{\rh_k,\la_k}(F)(V_{k }(\et)))(s)
 \nn  &= &
 \frac{e^{-i\rh_k\ln(\val s)}\val s^{1/2}}{\val{\la_k}^{1/2}}\int_\R 
\big(\int_\R e^{t/2} e^{-i\rh_k t}\hat F^{3,4}(t,\ve\frac{e^ts}{\la_k}- 
x,-\frac{\la_k}{2}(x+\ve\frac{e^ts}{\la_k}), \la_k)dt\big)\\
 \nn  & &
\frac{e^{i\rh_k\ln(\val{\la_kx})}}{\val x^{1/2}}\et(\ve\la_k x)dx\\
 &= &
 \frac{e^{-i\rh_k\ln(\val s)}\val s^{1/2}}{\val{\la_k}^{1/2}}\int_\R 
\big(\int_\R e^{t/2} e^{-i\rh_k t}\hat 
F^{3,4}(t,\ve\frac{e^ts}{\la_k}-\frac{\ve 
x}{\la_k},-\frac{1}{2}(\ve x+\ve{e^ts}{}),\la_k)dt\big)\\
 \nn  & &
{e^{i\rh_k\ln(\val{x})}\val x^{1/2}}{}\et( x)\frac {dx}{\val x}.
 \end{eqnarray}
Hence for $k $ large enough, $\vert{t}\vert\leq  M$, $\vert{ x}\vert\leq R_k \vert{ \la_k }\vert$, and $ \vert{s}\vert\geq C R_k  \vert{\la_k}\vert $ for some constant $C> 3 e^M$, we have that $\vert{\ve\frac{e^ts}{\la_k}-\frac{\ve x}{\la_k} }\vert> M $ and so $\hat F^{3,4}(t,\ve\frac{e^ts}{\la_k}-\frac{\ve x}{\la_k},-\frac{1}{2}(\ve x+\ve{e^ts}{}), { \la_k})=0 $.  Thus,
\begin{eqnarray*}
 M_{\{ \vert{ }\vert\geq C R_k \val{\la_k}\}}\circ 
V_{k}^*\circ \pi_{\rh_k,\la_k}(F)\circ V_{k }\circ  M_{\{ \vert{ }\vert\leq R_k 
\vert{\la_k }\vert\}}= 0 
 \end{eqnarray*}
 for $k $ large enough. Choose an even function $\va:\R \to \R_+ $ in $C_c(\R) $ with compact support such that
\begin{eqnarray*}
 \val{\hat F^{3,4}(t,x,y,\la)}\leq \va(t)\va(x)\va(y),\quad t, x, y, \la \in \R.
 \end{eqnarray*}
Now, by Young's inequality, 
\begin{eqnarray*}
 \nn  & &
\noop{M_{ \{\vert{ }\vert\leq C R_k \val{\la_k}\}}\circ V_{k}^*\circ  
\pi_{\rh_k,\la_k}  (F)\circ V_{k }\circ  M_{ \{\vert{ }\vert\leq R_k 
\vert{\la_k 
}\vert\} }}\\
\nn  &\leq &
\sup_{\val x\leq R_k \vert{\la_k }\vert }\int_{ \vert{s }\vert\leq C R_k 
\vert{\la_k }\vert}\frac{\val {x s}^{1/2}}{\val{\la_k}^{1/2}}\big(\int_\R 
e^{t/2} \vert \hat F^{3,4}(t,\ve\frac{e^ts}{\la_k}-\frac{\ve 
x}{\la_k},-\frac{1}{2}(\ve x+\ve{e^ts}{}),\la_k)\vert dt\big) \frac{ds}{\val 
s}\\
\nn  & &
+ \sup_{\val s\leq C R_k \vert{\la_k }\vert }\int_{ \vert{x }\vert\leq  R_k 
\vert{\la_k }\vert}\frac{\val {x s}^{1/2}}{\val{\la_k}^{1/2}}\big(\int_\R 
e^{t/2} \vert \hat F^{3,4}(t,\ve\frac{e^ts}{\la_k}-\frac{\ve 
x}{\la_k},-\frac{1}{2}(\ve x+\ve{e^ts}{}),\la_k)\vert dt\big) \frac{dx}{\val 
x}\\
\nn  &\leq &
\sup_{\val x\leq  R_k \vert{\la_k }\vert }\int_{ \vert{s }\vert\leq C R_k 
\vert{\la_k }\vert}\frac{\val x^{1/2}}{\val{\la_k s}^{1/2}}\big(\int_\R e^{t/2} 
\va(t)\va({\frac{ e^ts}{\la_k}-\frac{ 
x}{\la_k}})\va(\frac{1}{2}(x+{e^ts}{}))dt\big)ds\\
\nn  & &
+ \sup_{\val s\leq C R_k \vert{\la_k }\vert }\int_{ \vert{x }\vert\leq R_k 
\vert{\la_k }\vert}\frac{\val s^{1/2}}{\val{\la_k x}^{1/2}}\big(\int_\R e^{t/2} 
\va(t)\va({\frac{ e^ts}{\la_k}-\frac{ x}{\la_k}})\va(\frac{1}{2}( 
x+{e^ts}{}))dt\big)dx\\
\nn  &\leq &
R_k^{1/2}\int_{ \vert{s }\vert\leq C R_k \vert{\la_k 
}\vert}\frac{1}{\val{s}^{1/2}}\big(\int_\R e^{t/2} 
\va(t)\va(\frac{e^ts}{\la_k}-\frac{x}{\la_k})\va(\frac{1}{2}( x+{e^ts}{})) 
dt\big)ds\\
\nn  & &
+ C^{1/2} R_k^{1/2}\int_{ \vert{x }\vert\leq R_k 
\vert{\la_k}\vert}\frac{1}{\val{x}^{1/2}}\big(\int_\R e^{t/2} 
\va(t)\va(\frac{e^ts}{\la_k}-\frac{x}{\la_k})\va(\frac{1}{2}( x+{e^ts}{})) 
dt\big)dx\\
\nn  &\leq &
R_k^{1/2}\no\va_\iy^2\int_{ \vert{s }\vert\leq C R_k 
\vert{\la_k}\vert}\frac{1}{\val{ s}^{1/2}}ds\int_\R e^{t/2} \va(t) dt\\
\nn  & &
+ C^{1/2} R_k^{1/2}\no\va_\iy^2\int_{ \vert{x }\vert\leq R_k 
\vert{\la_k}\vert}\frac{1}{\val{ x}^{1/2}}dx\int_\R e^{t/2} \va(t) dt\\
\nn  &= &
R_k^{1/2}\no\va_\iy^2{2C^{1/2}R_k^{1/2}\val{\la_k}^{1/2}}\int_\R e^{t/2} 
\va(t)dt\\
\nn  & &
+ C^{1/2}R_k^{1/2}\no\va_\iy^2{2R_k^{1/2}\val{\la_k}^{1/2}}\int_\R 
e^{t/2}\va(t) 
dt\\
\nn  &\leq & 
C' R_k \vert{\la_k }\vert^{1/2}
 \end{eqnarray*}
 for some constant $C'> 0$. 
\end{proof}

\begin{lemma}\label{dek muk}
Let $(\rh_k)_k$ be a real sequences with $\limk \rh_k=0 $. Then for any real sequence $(\la_k )_k$ we have that
\begin{eqnarray*}\label{}
 \nn\limk \noop{\pi_{\rh_k,\la_k}(a)- {\pi_{0,\la_k}}(a)}= 0, \quad a\in C^*(G).
 \end{eqnarray*}
 \end{lemma}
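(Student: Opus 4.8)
The plan is to reduce to a dense subalgebra and then apply a Schur-test estimate to the difference of two kernel operators, exploiting the fact that $\pi_{\rh_k,\la_k}$ and $\pi_{0,\la_k}$ differ \emph{only} through the oscillatory factor $e^{-i\rh_k t}$ appearing in the kernel formula (\ref{kerfunc rh la}). First I would invoke density: since the functions $F\in\l1G$ with $\hat F^{3,4}\in C_c^\iy(\R^4)$ are dense in $C^*(G)$ and both $F\mapsto\pi_{\rh_k,\la_k}(F)$ and $F\mapsto\pi_{0,\la_k}(F)$ are contractive, it suffices to prove the claim for such $F$.

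For such an $F$, both $\pi_{\rh_k,\la_k}(F)$ and $\pi_{0,\la_k}(F)$ are kernel operators on $\l2\R$ given by (\ref{kerfunc rh la}), with identical integrands except for the factor $e^{-i\rh_k t}$ in the former versus $1$ in the latter. Hence their difference is the kernel operator with kernel
\[
K_k(u,x)=\int_\R e^{t/2}\bigl(e^{-i\rh_k t}-1\bigr)\hat F^{3,4}\bigl(t,\,e^tu-x,\,-\tfrac{\la_k}{2}(x+e^tu),\,\la_k\bigr)\,dt.
\]
Using $\val{e^{-i\rh_k t}-1}\le\val{\rh_k}\val{t}$ together with a dominating function $\va\in C_c(\R,\R_+)$ chosen so that $\val{\hat F^{3,4}(t,x,y,\la)}\le\va(t)\va(x)\va(y)$ for all $t,x,y,\la$ (possible since $\hat F^{3,4}$ has compact support, uniformly in $\la$), I obtain
\[
\val{K_k(u,x)}\le\val{\rh_k}\int_\R e^{t/2}\val{t}\,\va(t)\,\va(e^tu-x)\,\va\bigl(-\tfrac{\la_k}{2}(x+e^tu)\bigr)\,dt.
\]

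The final step is the Schur test. For the row sum, integrating in $x$ and substituting $w=e^tu-x$ gives $\int_\R\val{K_k(u,x)}\,dx\le C\val{\rh_k}$ uniformly in $u$, because the $\la_k$-dependent factor is crudely bounded by $\no\va_\iy$ while $\int_\R\va(w)\,dw=\no\va_1$; for the column sum, integrating in $u$ and substituting $v=e^tu-x$ (Jacobian $e^{-t}$) yields $\int_\R\val{K_k(u,x)}\,du\le C'\val{\rh_k}$ uniformly in $x$. The Schur inequality then produces
\[
\noop{\pi_{\rh_k,\la_k}(F)-\pi_{0,\la_k}(F)}\le\sqrt{CC'}\,\val{\rh_k}\limk 0 .
\]

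The main point to watch is that all constants must be independent of $\la_k$, since $(\la_k)_k$ is an arbitrary real sequence that may tend to $0$ or to $\infty$. This is precisely why the estimate does \emph{not} try to use the factor $\va\bigl(-\tfrac{\la_k}{2}(x+e^tu)\bigr)$, but instead discards it via $\no\va_\iy$ after the change of variables; the whole bound then collapses to the clean, $\la_k$-free quantity $C\val{\rh_k}$, which is the only genuinely delicate feature of the argument. The remaining computations are the routine change-of-variable manipulations already carried out in the proofs of Lemmas~\ref{limk de=0} and~\ref{limk de=0 and more}.
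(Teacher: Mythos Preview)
Your proof is correct. However, the paper takes a much shorter route that avoids kernel computations and the Schur test entirely. The key observation, visible already in the explicit formula for $\pi_{\rh,\la}(t,x,y,z)$, is that $\pi_{\rh,\la}(g)=\chi_\rh(g)\,\pi_{0,\la}(g)$ as unitary operators, where $\chi_\rh(t,x,y,z)=e^{-i\rh t}$ is a unitary character of $G$. Hence for $F\in C_c(G)$ one has directly
\[
\pi_{\rh_k,\la_k}(F)-\pi_{0,\la_k}(F)=\int_G(\chi_{\rh_k}(g)-1)\,\pi_{0,\la_k}(g)\,F(g)\,dg,
\]
and the operator norm of the right-hand side is bounded by $\int_G|e^{-i\rh_k t}-1|\,|F(g)|\,dg\le|\rh_k|\,\|F_1\|_1$ with $F_1(t,x,y,z)=tF(t,x,y,z)$. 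This produces the same $C|\rh_k|$ bound without ever opening the kernel formula, and the independence from $\la_k$ is automatic since $\la_k$ never appears in the estimate. Your Schur-test argument is perfectly sound and in keeping with the style of the neighbouring lemmas, but here the group-theoretic shortcut $\pi_{\rh,\la}=\chi_\rh\otimes\pi_{0,\la}$ is available and makes the proof a two-line affair.
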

 
\begin{proof} 
For $k\in \N$, the identity
\begin{eqnarray*}\label{}
 \nn \pi_{\rh_k,\la_k}(F)- \pi_{0,\la_k}(F)= \int_{G} { 
(\chi_{\rh_k}(g)-1)}\pi_{0,\la_k}(g)F(g)dg, \, F\in C_c(G),  
 \end{eqnarray*}  
where $\ch_{\rh_k}(\exp {t T}\cdot h)= e^{-i\rh_k t}$ for $t\in\R$ and $h\in H$, the Heisenberg group, shows that
\begin{eqnarray*}\label{}
 \nn \noop{\pi_{\rh_k,\la_k}(F)-{\pi_{0,\la_k}}(F)}\leq \vert{\rh_k }\vert\no {F_1}_1,
 \end{eqnarray*}
where $F_1(t,x,y,z):= t F(t,x,y,z)$ and $g= (t,x,y,z)\in G$.
\end{proof}

 \begin{lemma}\label{difference operator lak rhk ta om0} 
Let $(\pi_{\rh_k,\la_k} )_k$ be a properly converging sequence in $\wh G $ such that $\limk \la_k= 0 $ and $\om_k:= \la_k\rh_k, \limk \om_k=0 $. Take a sequence $(R_k)_k $ in $\R_+ $ such that  $\limk R_k= \iy $ and 
\begin{eqnarray}\label{Rk conditions 2- zero om}
\limk R_k^2\la_k= 0, \ \limk \frac{{\om_k}}{R_k^2\val{\la_k}}= 0. 
 \end{eqnarray}
Let $F\in \l1G $ such that $\hat F^{3,4}\in C_c(\R^4) $. For $\ve\in\{+1, -1\} $, 
$\la_k= \ve \vert{\la_k }\vert $, we have that 
\begin{enumerate}\label{limit -om+}
\item[(a)] $\nn \noop{\pi_{\rh_k,\la_k}(F)\circ V_{k}\circ M_{\{\geq \val{\la_k}R_k\}}- 
V_k\circ \ta^{+}_{\ve\om_k,-\ve}(F) \circ M_{\{\geq \val{\la_k}R_k\}} }\leq 
C\big(\frac{ \vert \om_k \vert }{\val{R_k^2\la_k}}+ R_k^{-1/2}\big)$,
\item[(b)]  
 \nn
 $\noop{\pi_{\rh_k,\la_k}(F)\circ V_{k}\circ M_{\{\leq -\val{\la_k}R_k\}}- V_k\circ 
\ta^{-}_{-\ve\om_k,\ve}(F) \circ M_{\{\leq -\val{\la_k}R_k\}} }\leq 
C\big(\frac{ 
\vert \om_k \vert}{\val{R_k^2\la_k}}+R_k^{-1/2}\big)$,
 \end{enumerate}
for some constant $C $ depending on $F $.
\end{lemma}

\begin{proof} The statements (a) and (b) are proved in the same way as the corresponding ones in Lemma~\ref{difference operator lak rhk ta}. 
\end{proof} 

\begin{remark}
 We can take, for example, $R_k= \frac{1}{\val{\la_k}^{1/3}}, k\in\N$, if the sequence $(\rh_k)_k $ is bounded. Otherwise, for any $k\in \N$, let $R_k^2= \val{\rh_k}M_k$, for some $M_k\in \R$ such that $\limk M_k= \iy$ and $\limk M_k\om_k= 0 $. In the second case we have that 
\begin{eqnarray*}\label{}
 \nn \limk \frac{\om_k}{R_k^2\la_k}&=&
 \limk \frac{\rh_k}{R_k^2}\\
 \nn  &= &
\limk\frac{1}{M_k}= 0.
 \end{eqnarray*}
 \end{remark}
 
For the following arguments, we need to work with the multiplication operator $M_I$, where $I$ is a measurable subset of $\R$, which acts on the space $L^2(\R, d\mu)$ with any Borel measure $d\mu$ on $\R$.


\begin{definition}\label{Tk def}
Suppose that $\limk \la_k= 0 $, $\rh_k\ne 0, k\in\N$, and $\limk \val 
{\rh_k\la_k}= 0$. As before let $\om_k:=\la_k\rh_k$ for all $k\in\N $.
Choose two sequences $(Q_k)_k, (P_k)_k$ in $\R_+$ such that $\limk Q_k= \iy= 
\limk P_k $, $\limk \frac{Q_k}{P_k}= 0 $, $\limk \om_k P_k= 0$ and $R_k\leq 
\vert{\rh_k}\vert Q_k $ for all $k\in \N$, where the sequence $(R_k)_k$ is given as in \eqref{Rk conditions 2- zero om}, i.e. $\limk R_k= \iy $,  $\limk R_k^2\la_k= 0$ and $\limk \frac{{\om_k}}{R_k^2\val{\la_k}}= 0$. Define the subsets $J_k^{\pm}$ and $I_{k,j}^{\pm}$, for $k\in\N$ and $j\in\{ 1,2,3\}$, in $\R$ by
{\begin{eqnarray*}
\nn {J_{k}^+}&:=& \rbrack R_k\vert \la_k\vert, \vert \om_k\vert Q_k \rbrack,\ 
J_{k}^-:= \lbrack -\vert \om_k\vert  Q_k,-R_k\vert \la_k\vert  \lbrack,\\  
 \nn I_{k,1}^+&:= & \rbrack 0, \vert \om_k\vert  Q_k \rbrack,\ \ \ \ 
I_{k,1}^-:= \lbrack -\vert \om_k\vert  Q_k,0  \lbrack,\\
 \nn  I_{k,2}^+&:= &] \vert \om_k\vert  Q_k, \vert \om_k\vert  P_k ],\ 
\ \ \ I_{k,2}^-:= [-\vert \om_k\vert  P_k,-\vert  \om_k\vert  Q_k  [, \\  
I_{k,3}^+&:= & \rbrack \vert \om_k\vert P_k,\iy \lbrack,\ \ \ \ 
I_{k,3}^-:= \rbrack -\iy,-\vert \om_k\vert  P_k \lbrack.
 \end{eqnarray*}}
\end{definition}

\begin{lemma}\label{limk rhk lak=0}
Suppose that $\la_k= \ve \vert{\la_k }\vert$, $\rh_k\ne 0$, for $k\in \N$, 
$\limk \la_k= 0 $, and $\limk \val {\rh_k\la_k}= 0$. Take a sequence $(R_k)_k $ in 
$\R_+ $ and sequences $(Q_k)_k, (P_k)_k\subset \R_+$ satisfying the 
conditions in Definition~\ref{Tk def}. 
Then for any $a\in C^*(G) $, we have that 
 \begin{enumerate}\label{} 
\item[(a)] \begin{eqnarray*}
 \limk \noop{ \ta^+_{\om_k,-\ve }(a)-\ta^+_{\om_k,0}(a)\circ M_{J_{k}^+}\oplus 
\ta^+_{0,0 }(a)\circ M_{I_{k,2}^+}\oplus 
\ta^+_{0,-\ve\vert{\om_k  }\vert  }(a)\circ M_{I_{k,3}^+}}= 0,
\end{eqnarray*}
\item[(b)]  \begin{eqnarray*}
\limk \noop{ \ta^-_{-\om_k,\ve }(a)-\ta^-_{-\om_k,0}(a)\circ M_{J_{k}^-}\oplus 
\ta^-_{0,0 }(a)\circ M_{I_{k,2}^-}\oplus 
\ta^-_{0,\ve\vert{\om_k  }\vert  }(a)\circ M_{I_{k,3}^-}}= 0, 
 \end{eqnarray*}
 \end{enumerate}
where the representation $\ta^{\pm}_{\mu, \nu}$ is defined in \eqref{ta mu nu}.
 \end{lemma}
 
 \begin{proof} 
 First let $F\in \l1G $ such that $\hat F^{2,3,4}\in C_c^\iy(\R^4) $. Then 
there 
exists $\va\in C_c^\iy(\R) $ of non-negative values 
such that for all $t, t', x, x', y, y'\in \R$, 
\begin{eqnarray*}\label{}
 \nn  \vert{\hat F^{2,3,4}(t,x,y,0) }\vert &\leq &
 \va(t)\va(x)\va(y) \quad \text{and}\\
 \vert {\hat F^{2,3,4}(t,x,y,0)- {\hat F^{2,3,4}(t',x', y', 
0)}}\vert \nn  &\leq 
&
 \vert{t-t' }\vert\va(x)\va(y)+ \vert{x-x' }\vert\va(t)\va(y)+ 
\vert{y-y'}\vert\va(t)\va (x).
 \end{eqnarray*}
Then it follows that 
\begin{eqnarray*}\label{}
 & &
 \nn \vert  
(\ta^+_{\om_k,{-\ve}}(F)- \ta^+_{\om_k,0}(F))(M_{I_{k,1}^+}(\et))(u)\vert \\
&= &
 \vert \int_{I_{k,1}^+} (\hat F^{2,3,4}(t- \ln u, \om_k e^{-t},{ -\ve e^t},0)- 
\hat F^{2,3,4}(t- \ln u, \om_k e^{-t},0,0))M_{I_{k,1}^+}\et(e^t)dt\vert\\
&\leq &
 \int_{I_{k,1}^+} e^t \va(t-\ln(u))\va(\om_k e^{-t})\vert 1_{I^+_{k,1}}(e^t)\et(e^t)\vert dt\\
&\leq &
\vert\om_k\vert Q_k \int_{I_{k,1}^+} \va(t-\ln(u))\va(\om_k e^{-t})\vert 
\et(e^t)\vert dt.
 \end{eqnarray*} 
Hence
\begin{eqnarray*}\label{}
 \nn \no{(\ta^+_{\om_k,-\ve}(F)- \ta^+_{\om_k,0}(F))\circ M_{I_{k,1}^+}(\et)}^2 
&\leq &
 {\om_k }^2 Q_k^2\int_{\R_+} \Big(\int_{I_{k,1}^+} \va(t-\ln(u))\va(\om_k 
e^{-t})\vert \et(e^t)\vert dt\Big)^2\frac{du}{u}\\
\nn  
&\leq &
{\om_k }^2 Q_k^2\int_{\R_+} \Big(\int_{I_{k,1}^+}\va(t-\ln(u))dt 
\int_{I_{k,1}^+}\va(t-\ln(u))\va(\om_k e^{-t})^2\vert \et(e^t)\vert^2 
dt\Big)\frac{du}{u}\\
\nn  
&\leq &
{\om_k }^2 Q_k^2\no\va_1\no\va^2_\iy\int_{\R_+} 
\int_{I_{k,1}^+}\va(t-\ln(u))\vert \et(e^t)\vert^2 
dt\frac{du}{u}\\
\nn  &\leq &
{\om_k }^2 Q_k^2  \no\va^2_\iy \no\va_1^2 \no\et^2.
 \end{eqnarray*}
Since $0\leq \om_k Q_k\leq \om_k P_k = \vert{\la_k }\vert \vert \rh_k \vert P_k \to 0 $, it follows that
\begin{eqnarray*}\label{}
 \nn \limk\noop{\ta^+_{\om_k,-\ve}(F)\circ M_{I_{k,1}^+}- 
\ta^+_{\om_k,0}(F)\circ M_{I_{k,1}^+}}= 0.
 \end{eqnarray*}
Since for any $e^t\in I_{k,2}^+ $ we have that
\begin{eqnarray*}\label{}
 \nn \frac{1}{P_k} \leq \frac{ \vert{\om_k }\vert}{e^t}\leq \frac{1}{Q_k},
 \end{eqnarray*}
we get
\begin{eqnarray*}\label{}
 & &
 \nn \vert(\ta^+_{\om_k,{-\ve}}(F)- \ta^+_{0,0}(F))(M_{I_{k,2}^+}(\et))(u)\vert 
\\
&= &
 \vert \int_{I_{k,2}^+} (\hat F^{2,3,4}(t-\ln u,\om_k e^{-t},{ -\ve 
e^t},0)-\hat 
F^{2,3,4}(t-\ln u,0,0,0))\et(e^t) dt\vert\\
&\leq &
 \int_{I_{k,2}^+} 1_{I^+_{k,2}}(\om_k e^{-t})\vert \om_k e^{-t}\vert 
\va(t-\ln(u))\va(-\ve e^{t})\vert \et(e^t)\vert dt
+\int_{I_{k,2}^+}  1_{I^+_{k,2}}(-\ve e^{t})e^t 
\va(t-\ln(u))\va(\om_k e^{-t})\vert \et(e^t)\vert dt\\
&\leq &
(\frac{1}{Q_k}+ \vert \om_k\vert {P_k} )\no\va_\iy\int_{\R_+} 
\va(t-\ln(u))\vert 
\et(e^t)\vert dt.
 \end{eqnarray*} 
This relation implies that $\limk \noop{\ta^+_{\om_k,-\ve}(F)\circ M_{I_{k, 
2}^+}- \ta^+_{0,0}(F)\circ M_{I_{k, 2}^+}}= 0 $.

In the same way, we have that
\begin{eqnarray*}\label{}
 & &
 \nn \vert (\ta^+_{\om_k,-\ve}(F)- \ta^+_{0,-\ve\vert \om_k\vert}(F))(M_{I_{k,3}^+}(\et))(u)\vert \\
&= &
 \vert \int_{\R} (\hat F^{2,3,4}(t- \ln u, \om_k e^{-t},{ -\ve e^t},0)- \hat 
F^{2,3,4}(t- \ln u, 0, {-\ve \vert \om_k \vert e^t}, 0))M_{I_{k,3}^+}\et(e^t)dt \vert\\
&= &
 \vert \int_{I^+_{{k,3}}} (\hat F^{2,3,4}(t+\ln(\vert{\om_k }\vert ) -\ln 
u,\frac{1}{sign (\om_k) e^t}, { -\ve \vert{\om_k }\vert e^t},0)\\
& & 
- \hat F^{2,3,4}(t+\ln( \vert{\om_k }\vert )-\ln u, 0, {-\ve\vert{\om_k }\vert 
e^t}, 0)) \et( \vert{\om_k }\vert e^t) dt \vert\\
&\leq &
\int_{I_{k,3}^+} e^{-t}\va(t+ \ln( \vert{\om_k }\vert 
)- \ln(u)) \va(-\ve\vert \om_k\vert e^t)\vert \et( \vert{\om_k }\vert e^t)\vert dt\\
&\leq &
\frac{\no\va_\iy}{P_k} \int_{I_{k,3}^+} \va(t+ \ln(\vert{\om_k }\vert )- 
\ln(u))\vert \et( \vert{\om_k }\vert e^t)\vert dt.
 \end{eqnarray*} 
 Hence
 \begin{eqnarray*}\label{}
& &
\nn \no{(\ta^+_{\om_k,-\ve}(F)- \ta^+_{0,-\ve \vert{\om_k }\vert }(F))\circ 
M_{I_{k,3}^+}(\et)}^2 \\
&\leq &
 \frac{\no\va_\iy^2}{P_k^2}\int_{\R_+} \Big(\int_{I_{k,3}^+} \va(t+ \ln( 
\vert{\om_k }\vert )- \ln(u)) M_{I_{k,3}^+} \et( \vert{\om_k }\vert 
e^t)dt\Big)^2\frac{du}{u}\\
&\leq &
\frac{\no\va_\iy^2}{P_k^2}\int_{\R_+}\big(\int_{I^+_{k, 3}}\va(t+ \ln( \vert{\om_k }\vert)- \ln(u))dt 
\int_{I_{k,3}^+} \va(t+ \ln( \vert{\om_k }\vert )- \ln(u))\vert \et( 
\vert{\om_k }\vert e^t)\vert^2 dt\big)\frac{du}{u} \\
 &\leq&
\frac{\no\va_\iy^2 \no\va_1}{P_k^2}\int_{\R_+} \int_{I_{k,3}^+} \va(t+ \ln( \vert{\om_k 
}\vert )- \ln(u))\vert \et( \vert{\om_k}\vert e^t)\vert^2 dt\frac{du}{u} \\
\nn  & \leq &
\frac{\no\va_\iy^2 \no\va_1^2}{P_k^2} \no\et^2 .
 \end{eqnarray*} 

We proceed similarly for the second relation.
\end{proof}

\begin{definition}\label{sik def om=0}
\rm   Let $(\pi_{\rh_k,\la_k})_{k\in\N} $ be a properly converging sequence with limit set $L=\Ga_1\cup\Ga_0 $ (i.e. $\limk \la_k= 0$ and $\limk \om_k= 0 $ where $\om_k:= \rh_k \la_k= \rh_k\ve \vert{\la_k }\vert $) and $\rh_k\ne0$ for $k\in\N$. We choose a sequence $(R_k)_k\subset \R_{+} $ such that
\begin{eqnarray*}
 \limk R_k= +\iy, \limk R_k^2\la_k= 0, \limk \frac{\om_k}{R^2_k\la_k}= 0,
 \end{eqnarray*}
and sequences $(Q_k)_k, (P_k)_k $ as in Definition~{\ref{Tk def}}. For $k\in\N $ and $\phi\in l^\iy(\widehat{ G})$, define the bounded linear operator $\si_k^{0,\om_k,\ve}(\phi \res L) $ (resp. $(\si_k^{ {0, \om_k,\ve}})'(\phi\res L) $) on $\l2\R$ by 
\begin{eqnarray*} 
 \si_k^{0,\om_k,\ve}(\phi\res L)&:=&
  V_k\circ  s_k^{0,\om_k, \ve, +}(\phi\res L)\circ V_k^*\oplus V_k\circ s_k^{0,\om_k,\ve,-}(\phi\res L)\circ  V_k^*,
 \end{eqnarray*}
 where 
 \begin{eqnarray*}\label{}
 \nn s_k^{0,\om_k,\ve,+}(\phi\res L) &:= & 
 \phi(\ta^+_{ \om_k,0})\circ M_{J_{k}^+}\oplus \phi(\ta^+_{0,0 })\circ M_{I_{k,2}^+}\oplus \phi(\ta^+_{0,-\ve\vert{\om_k}\vert})\circ M_{I_{k,3}^+},
 \end{eqnarray*} and
 \begin{eqnarray*}\label{}
 \nn s_k^{0,\om_k,\ve,-}(\phi\res L) &:= &
   \phi(\ta^-_{-\om_k,0})\circ M_{J_{k}^-}\oplus \phi(\ta^-_{0,0 })\circ M_{I_{k,2}^-}\oplus \phi(\ta^-_{0,\ve\vert{\om_k}\vert})\circ M_{I_{k,3}^-}.
 \end{eqnarray*}
\end{definition}

\begin{theorem}\label{lim rhklak is om0}
 Suppose that $\limk \la_k= 0$, $\limk \rh_k\la_k= 0$ and $\la_k= \ve\vert{\la_k}\vert$ for $k\in\N $. Then we have 
\begin{eqnarray*}
 & &
 \limk \noop{\pi_{\rh_k,\la_k}(a)- {\si_{k}^{0,\ve\om_k,\ve}(\hat a\res{L})}}= 0
 \end{eqnarray*}
for every $ a\in C^*(G)$.
 \end{theorem} 
 
\begin{proof} 
Apply Lemma~\ref{difference operator lak rhk ta om0} and Lemma~\ref{limk rhk lak=0}.
\end{proof}

\subsection{The final theorem}\label{fth}

Note that the representations in $\GA_1 $ do not map $C^*(G) $ into the algebra of compact operators on $\l2{\R_{+},\frac{dx}{|x|}} $. Following \cite[Definition 6.2]{Lin-Lud}, we define the {\it compact condition} on 
$l^\iy(\widehat{G}) $, which will be satisfied by the elements of $C^*(G) $.

Choose a positive valued function $q\in C_c(\R^2) $ such that $\int_{\R^2} q(x,y)dxdy= 1$. For $f\in 
\l1\R $, define the function $E(f)\in \l1{\R^3}\simeq \l1{G/\ZZ} $ by
\begin{eqnarray*}\label{}
 \nn E(f)(t, x, y):= f(t)q(x,y) \, \text{ for all } (t, x, y) \in \R^3.
 \end{eqnarray*}
We see that for any $\ell= (0, \mu,\nu, 0)$, $\mu, \nu \in \R$ and $\pi_{\mu, \nu}:= \pi_{\ell}= \pi_{(0, \mu, \nu, 0)}$:
\begin{eqnarray*}\label{}
 \nn \pi_{\mu,\nu}(E(f))(\et)(u)= \int_\R f(u-t)\hat q(\mu e^{-t},\nu 
e^t)\et(t)dt, \quad \et\in\l2\R, u\in\R.
 \end{eqnarray*}
Let $l(f)$ be the operator $(l(f)\xi)(u):= \int_{\R} f(t)\xi(u-t)dt$, for $\xi\in 
\l2\R$ and $u\in\R $, given by the regular representation of the group $(\R,+)$ on 
the Hilbert space ${\l2\R}$, and let $q_{\mu,\nu} $ be the continuous bounded 
function defined by $q_{\mu,\nu}(t):= \hat q(e^{-t}\mu, e^t \nu)$ for $t\in\R $. We thus have  
\begin{eqnarray*}\label{}
 \nn \pi_{\mu,\nu}(E(f))(\et)= l(f)(q_{\mu,\nu}\cdot \et).
 \end{eqnarray*}
 Hence, 
 \begin{eqnarray*}\label{}
 \nn \noop{\pi_{\mu,\nu}(E(f))} \leq \noop{l(f)}\no{q_{\mu,\nu}}_\iy
 \end{eqnarray*}
is bounded for $\ell \in \Ga_2\cup \Ga_1$. Clearly, $\pi_{\ell}(E(f))$ is bounded for $\ell= (\tau, 0, 0, 0) \in \Ga_0$. This shows that the mapping $E $ extends to a bounded linear mapping of $C^*(\R)$ into $C^*(G/\ZZ) $. 
 We then have the bounded linear mapping 
\begin{eqnarray}\label{Edef}
 \si_0: C_0(\R)\to C^*(G/\ZZ) \ \text{ given by } \ \si_0(\ps):= E(\F\inv(\ps)),
 \end{eqnarray}
where $\F\inv: C_0(\R)\to C^*(\R) $ is the inverse Fourier transform.

It follows from Proposition~\ref{normcontinuity} and that $C_0({\Gamma_0})= C_0(\R)$, we thus can apply the bounded linear map $\sigma_0$ to $\phi\res{\Ga_0}$ for any $\phi\in l^\iy(\wh{G})$.

\begin{definition}\label{compact condition}
\rm An operator field $\phi$ defined on $\widehat G$ is said to satisfy the \textit{compact condition} if for $\ve\in\{+1, -1\}$, the operators $\phi(\pi_{\ve,0})-\pi_{\ve,0}(\si_0(\phi\res{\Ga_0}))$ and $\phi(\pi_{0,\ve})- \pi_{0,\ve}(\si_0(\phi\res{\Ga_0}))$ are compact. Here $\pi_{\ve, 0}$ (resp. $\pi_{0, \ve}$) is the representation $\pi_{\ell}$, where $\ell= (0, \ve, 0, 0)$ (resp. $\ell= (0, 0, \ve, 0)$). 
\end{definition}

\rm 
Let $(O_{\ell_k})_k$, $\ell_k= (0, \ve\rho_k, \si, 0) \in \GA_2$ for all $ k$, be a properly converging sequence in $ \widehat {G}$, whose limit set contains the orbits $ O_{(0,\ve, 0,0)} $ and $ O_{(0, 0, \si, 0)} $. Let $(r_k)_k \subset \R$ be such that $e^{-r_k}\rho_k= 1$ for all $k \in \N$. Then $ \lim_{k\to \iy} r_k= -\iy$. Choose a positive sequence $ (\alpha_k)_k $ such that $ \alpha_k> -r_k$ for all $k\in\N$, $\lim_{k\to\iy} \alpha_k+ r_k= \iy$ and $ \lim_{k\to\iy} \frac{\alpha_k+ r_k}{r_k}= 0$. We say
that the sequence $ (\alpha_k)_k $ is  \textit{adapted} to the sequence $
(\ell_k)_k $.
\rm   For $ r \in \R $, let $ U(r) $ be the unitary operator on $\l2\R$ defined by
\begin{eqnarray}\label{}
 \nn U(r)\xi(s) &:=&\xi(s+ r) \ \mbox{for all } \xi\in\l2\R \ \mbox{and } s\in\R.
\end{eqnarray}

\begin{definition}\label{gencon}
\rm  Let $ \phi$ be an operator field defined over $\wh G$.  We say
that $ \phi $ satisfies the \textit{generic condition}(see \cite[Definition 5.10]{Lin-Lud}) if for every properly
converging sequence $ (\pi_{\ell_k})_k\subset \GA_2$ which admits limit points $ \pi_{(0,\ve,0,0)}, \pi_{(0,0,\si,0)} $ and for a sequence $( \alpha_k)_k$ adapted to the sequence $ (\ell_k)_k$, we have that
\begin{enumerate}\label{geneq}
\item \begin{eqnarray}
\nn \lim_{k\to\iy}\noop{U(r_k)\circ \phi(\pi_{\ell_k})\circ U(-r_k)\circ
M_{(-\iy,\alpha_k)}- \phi(\pi_{0,\ve,0,0})\circ M_{(-\iy,\alpha_k)}}&=&0,
\end{eqnarray}
\item \begin{eqnarray}\label{}
\nn \lim_{k\to\iy}\noop{U(r_k)\circ \phi(\pi_{\ell_k})\circ U(-r_k)\circ
M_{(-\alpha_k,\iy)}-\phi(\pi_{0,0,\si, 0)}\circ M_{(-\alpha_k,\iy)}}&=&0.
\end{eqnarray}
\end{enumerate}
\end{definition}

\begin{remark}\label{gensat}
\rm   According to Proposition 5.12 in \cite{Lin-Lud}, every $\phi \in \widehat{C^*(G/\ZZ)} $ satisfies the generic condition as $G/\ZZ$ is an $ax+b$-like group.
 \end{remark}

We identify as before the spectrum $\widehat{G}$ of $G$ with the set 
\begin{align*}
\GA= \Sigma_3 \cup \Sigma_2\cup \Sigma_1\cup \Sigma_0,
\end{align*}
and the C*-algebra $l^\iy(\widehat{G}) $ is then the algebra of all uniformly bounded operator fields $\ph$ defined on $\widehat{G}$ with values in $B(\l2\R) $ on $\Sigma_3 $, values in $\B(\l2{\R_+, \frac{dx}{|x|}})$ on $\Sigma_2\cup\Sigma_1 $, and with values in $\C $ on $\Sigma_0 $. We define in the following the subset $D^*(G)$ of $ l^\iy(\wh{G}) $ which will be our desired C*-algebra of Boidol's group.

\begin{definition}\label{def:DstarG}
\rm Let $ D^*(G) $ be the subset of $ l^\iy(\wh{G}) $ consisting of all the 
operator fields $ \ph $ defined over $ \wh{G} $ with the following properties:
\begin{enumerate}\label{}
\item  the mapping $\ga\mapsto \ph(\ga) $ vanishes at infinity,


\item
 \begin{enumerate}
 \item the mapping $\ga\mapsto \ph(\pi_\ga)$ is norm continuous on the set $\Sigma_3$,
 \item for any $\ga \in \Sigma_3 $ the operator $\ph(\ga) $ is compact,
 \item for every properly converging sequence 
$(\pi_{\rh_k,\la_k})_k$ in $\Sig_3 $ with limit set $L\in \Sig_2 $ (i.e. when $\limk 
\om_k= \om\ne 0 $, where $\om_k= \rh_k \la_k$ and $\la_k= \ve |\la_k|$), 
for the sequence $(R_k)_k\subset \R_{+} $ with the 
properties given in (\ref{Rk conditions 2- nonzero om}) and the operator 
$\si^\om_k $ defined in (\ref{control om}), 
we have that
\begin{eqnarray*}\label{}
 \nn \limk\noop{\ph(\pi_{\rh_k, \la_k})- \si^\om_k(\ph\res{L})}=  0,
 \end{eqnarray*}
\item for every properly converging sequence $(\pi_{\rh_k,\la_k})_k$ in $\Sig_3 $ with limit set $L=  \Sig_1\cup\Sig_0 $, (i.e. when $\limk \om_k= 0 $), admitting a sub-sequence, we have
\begin{eqnarray*}\label{}
 \nn \limk\noop{\ph(\pi_{\rh_k,\la_k})- \si_k^{0,\ve\om_k,\ve}(\ph\res{L})}= 0,
 \end{eqnarray*}
 where the sequences $(R_k), (P_k) $ and $(Q_k) $ are as in Definition~\ref{Tk def},
 \end{enumerate}

\item  
 \begin{enumerate}\label{}
\item the mappings $\ga\mapsto \ph(\pi_\ga)$ are norm continuous on the sets $\Sig_j$ for $j= 0, 1, 2$ (in particular the function 
$\ph\res{\Sig_0} $ is in $C_0(\Sig_0)= C_0(\R) \simeq C^*(\R)$),
 \item for any $\ga\in \Sig_2$, the operator $\ph(\ga) $ is compact,
 \item for every properly converging sequence $(\ta_{\om_k,-\ve})_k$ in $\Sig_2$, 
$\ve\in\{+1, -1\}$, with limit set $L= \Sig_1\cup \Sig_0 $ (i.e. $\limk \om_k= 0$), 
for the sequence $(R_k)\subset \R_{+} $ with the 
properties in Definition \ref{sik def om=0}, we have  that  
{\begin{eqnarray*}\label{}
& &
\limk \noop{ \ph(\ta^+_{\om_k,-\ve })- {s_k^{0,\om_k,\ve,+}(\ph\res{L})}}= 0\\
  \text{and }\\
& &
\limk \noop{ \ph(\ta^-_{-\om_k,\ve })- {s_k^{0,\om_k,\ve,-}(\ph\res{L})}}= 0,
 \end{eqnarray*}}
  \item  the operator $\phi(\gamma)$, $\ga\in \Sig_2$, satisfies the generic condition given in 
Definition \ref{gencon},
 \item  the operator $\ph(\ga)$, $\ga\in \Sig_1$, satisfies the compact 
condition given in Definition \ref{compact condition}, 
\end{enumerate}

\item  the adjoint operator field $\ph^* $ satisfies the same conditions.
 \end{enumerate}
 \end{definition}

\begin{remark}
The operators $\sigma_k^{0, \ve\om_k, \ve}$, $s_k^{0, \om_k, \ve, \pm}$ and 
$\si_k^{\om}$ are defined in Definition~\ref{sik def om=0} and \eqref{control 
om}, respectively. Note that due to the limit set of a properly converging 
sequence may lie in different $\Sigma_j$, $j= 0, 1, 2$, the operators involved 
in the approximations will depend on sequences $(R_k)_k$ in $\R_{+}$ with 
different conditions given previously, and on sequences $(Q_k)_k, (P_k)_k$ which 
give rise the internals $I^{\pm}_{k, 2}, J^{\pm}_{k}$ used in 
Definition~\ref{sik def om=0}. 
\end{remark}

\rm
We will show that the set $D^*(G)$ defined in Definition~\ref{def:DstarG} is a C*-subalgebra of $l^\iy(\widehat{G})$. 


\begin{remark}\label{cstar GZZ}
\rm   Since $\ZZ$ is the centre of Boidol's group $G$, we have seen in Remark \ref{closed} that the spectrum of $G/\ZZ$ (thus, of $C^*(G/\ZZ)$) can be identified with the subset $S_2:= \GA_2\cup \GA_1\cup \GA_0 $ of the coadjoint orbit space, and the group $G/\ZZ$ is $ax+ b$-like. It has 
been shown in \cite[Section 8]{Lin-Lud} that the family of uniformly bounded operator 
fields defined over the set $S_2:= \Sig_2\cup \Sig_1\cup \Sig_0 $ satisfying 
the conditions $(1) $, $(3)$ and $(4)$ in Definition~\ref{def:DstarG} forms a C*-algebra (\cite[Proposition 8.2]{Lin-Lud}), denoted by $D^*(G/\ZZ)$ for our purpose, and it is isomorphic to $C^*(G/\ZZ) $ via the 
Fourier transform. Hence the natural restriction map
$R_{G/\ZZ}$: $\widehat{C^*(G)}\to \widehat{C^*(G/\ZZ)}$
is surjective, so is the natural quotient map $P_{G/\ZZ}:C^*(G)\to C^*(G/\ZZ) 
$ (see Remark \ref{closed}(2)). This shows that for every operator field $\phi\in D^*(G/\ZZ)$, 
there exists an $a_\phi\in C^*(G) $ such that $\phi$ coincides with $\widehat{a_\phi}\res{S_2}$. That is, for every $\phi\in D^*(G/\ZZ)= \F_{G/\ZZ}(C^*(G/\ZZ))$, where $\F_{G/\ZZ}$ is the Fourier transform defined on the C*-algebra of $G/\ZZ$ (see also \cite[Definition 5.3]{Lin-Lud}), thanks to the isomorphism $C^*(G/\ZZ)\simeq C^*(G)/K_{S_2}$ (given in Remark \ref{closed}(2)) and the spectrum $\widehat{C^*(G)}$ can be identified with $\bigcup_{i= 0}^3 \Sigma_i$ which contains $S_2$, we have such an $a_{\phi}\in C^*(G)$.
 \end{remark}

Note that it follows from the preceding sections, mainly Proposition~\ref{normcontinuity}, Theorems~\ref{lim rhklak is om non0}, 
\ref{lim rhklak is om0} and Remarks~\ref{gensat} and \ref{cstar GZZ}, that $\F(a)= \hat{a}$, $a\in C^*(G)$, satisfies all the conditions in $D^*(G)$. Thus, $\widehat{C^*(G)}\subset D^*(G)$.

\begin{lemma}\label{Dstar complete}
The set $D^*(G)$ is a complete subspace of $l^\iy(\widehat{G})$.
\end{lemma}

\begin{proof}
Let $(\phi_n)_n \subset D^*(G)$ be a Cauchy sequence. Then for every $\ga \in 
\widehat{G}$, the sequence $(\phi_n(\ga))_n$ is a Cauchy sequence  
in the C*-algebra of bounded linear operators on the corresponding Hilbert 
spaces $\H_\ga $, and so $\ph(\ga)= \limk \ph_k(\ga) $ exists in $\B(\H_\ga) $. 
By Remark~\ref{cstar GZZ}, there is an operator field $\phi$ 
such that $\phi|_{S_2}$ is contained in $C^*(G/\ZZ)$ and 
$\noop{\phi_n(\ga)- \phi(\ga)} \to 0$ as $n\to \iy$ for every $\ga\in S_2$. 
It then suffices to show that $\phi$ satisfies the conditions (2) in Definition~\ref{def:DstarG}. It is clear that $\phi$ satisfies (2-a) and (2-b). For any properly converging sequence $(\pi_{\rho_k, \la_k})_k$ in $\Sig_3$ with limit set in $\Sig_2$, we have that $\lim_{k \to \iy} \noop{\phi_n(\pi_{\rho_k, \la_k})- \si_k^{\om}(\phi_n)}= 0$ for all $n$ (since $\phi_n$ is in $D^*(G)$). Hence, 
\begin{eqnarray*}
\noop{\phi(\pi_{\rho_k, \la_k})- \si_k^{\om}(\phi \res{L})} & \leq & \noop{\phi(\pi_{\rho_k, \la_k})- \phi_n(\pi_{\rho_k, \la_k})}+ \noop{\phi_n(\pi_{\rho_k, \la_k})- \si_k^{\om}({\phi_n}\res{L})} \\
 & & + \noop{\si_k^{\om}({\phi_n}\res{L})- \si_k^{\om}(\phi\res{L})}
\end{eqnarray*}
which converges to zero, thus, (2-c) is satisfied by the operator field $\phi$. Similar arguments hold for the condition (2-d). Therefore, we have that $\phi \in D^*(G)$. 
\end{proof}

\begin{lemma}\label{Dstar is cstar}
The subspace $D^*(G) $ of $l^\iy(\widehat{G}) $ is a postliminal C*-algebra with spectrum equal to $\widehat{G} $.
\end{lemma}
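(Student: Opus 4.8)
The plan is to show that $D^*(G)$ is a norm-closed $*$-subalgebra of $l^\iy(\widehat G)$, and then to compute its spectrum via a short exact sequence. Throughout, for a properly converging sequence $(\pi_k)_k=(\pi_{\rh_k,\la_k})_k$ with limit set $L$, I abbreviate the relevant gluing operator by $\si_k$ (meaning $\si_k^\om(\,\cdot\res L)$ in case (2)(c) and $\si_k^{0,\ve\om_k,\ve}(\,\cdot\res L)$ in case (2)(d)). Since each $\si_k$ is assembled from the \emph{unitaries} $V_k$, the orthogonal cut-off projections $M$, and the evaluations of the field, it is linear and uniformly bounded, $\noop{\si_k(\va)}\le C_0\no{\va}_\iy$ with $C_0$ independent of $k$.

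\emph{Linear and $*$-structure, and norm-closedness.} Conditions (1), (2)(a), (2)(b), (3)(a), (3)(b), (3)(d) are manifestly stable under linear combinations, while the gluing conditions (2)(c), (2)(d), (3)(c) are stable because each $\si_k$ (and each $s_k^{0,\om_k,\ve,\pm}$) is linear; closure under the adjoint is built into condition (4). For norm-closedness I would use a $3\epsilon$-estimate: if $\va_n\to\va$ uniformly with $\va_n\in D^*(G)$, then the continuity, compactness and decay conditions pass to the uniform limit as closed conditions, while for a gluing condition one writes $\noop{\va(\pi_k)-\si_k(\va)}\le (1+C_0)\no{\va-\va_n}_\iy+\noop{\va_n(\pi_k)-\si_k(\va_n)}$ and lets $k\to\iy$ and then $n\to\iy$.

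\emph{Multiplicative closure (the crux).} For $\va,\psi\in D^*(G)$ set $(\va\psi)(\ga):=\va(\ga)\psi(\ga)$. Vanishing at infinity and norm continuity on each $\Ga_i$ are routine, and compactness on $\Ga_3$ and $\Ga_2$ follows since a product of a compact operator with a bounded one is compact. All of the $\Ga^0$-conditions (1), (3), (4) restricted to $\Ga^0=\Ga_2\cup\Ga_1\cup\Ga_0$ describe, by Remark~\ref{cstar GZZ}, exactly the C*-algebra $C^*(G/\ZZ)$, hence are automatically preserved under products. It then remains to verify (2)(c) and (2)(d) for $\va\psi$, for which I would establish that the gluing maps are \emph{asymptotically multiplicative}, i.e. $\noop{\si_k(\va)\si_k(\psi)-\si_k(\va\psi)}\to0$. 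For $\si_k^\om$ the two summands act on the orthogonal halves $L^2(\R_+)$ and $L^2(\R_-)$, so on $\R_+$ the error is the single operator $-\va(\ta^+_{\ve\om,-\ve})(\id-M_{\{\geq\val{\la_k}R_k\}})\psi(\ta^+_{\ve\om,-\ve})M_{\{\geq\val{\la_k}R_k\}}$; since $\ta^+_{\ve\om,-\ve}\in\Ga_2$ the operator $\va(\ta^+_{\ve\om,-\ve})$ is compact by (3)(b), while $\id-M_{\{\geq\val{\la_k}R_k\}}$ is the projection onto the shrinking interval $(0,\val{\la_k}R_k)$ (recall $\val{\la_k}R_k\to0$), which converges to $0$ strongly, so the error tends to $0$ in operator norm. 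For $\si_k^{0,\ve\om_k,\ve}=V_k\bigl(s_k^{0,\om_k,\ve,+}\oplus s_k^{0,\om_k,\ve,-}\bigr)V_k^*$ the corresponding statement for the pieces $s_k^{0,\om_k,\ve,\pm}$ is again part of the $\Ga^0$-picture furnished by Remark~\ref{cstar GZZ}. Granting asymptotic multiplicativity, the estimate for $\va\psi$ follows from
\[
\va(\pi_k)\psi(\pi_k)-\si_k(\va\psi)
=\va(\pi_k)\bigl(\psi(\pi_k)-\si_k(\psi)\bigr)
+\bigl(\va(\pi_k)-\si_k(\va)\bigr)\si_k(\psi)
+\bigl(\si_k(\va)\si_k(\psi)-\si_k(\va\psi)\bigr),
\]
each term tending to $0$ by (2)(c)/(2)(d) for $\va$ and $\psi$, boundedness of $\va(\pi_k)$ and $\si_k(\psi)$, and asymptotic multiplicativity. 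Thus $\va\psi\in D^*(G)$ and $D^*(G)$ is a C*-algebra.

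\emph{Spectrum.} To identify $\widehat{D^*(G)}$ I would use the restriction homomorphism $r\colon\va\mapsto\va\res{\Ga^0}$. By Remark~\ref{cstar GZZ} its image lies in $C^*(G/\ZZ)$, and it is onto: given $\ps$ over $\Ga^0$ we have $\ps=\widehat{F}\res{\Ga^0}$ for some $F\in C^*(G)$ (via the projection $P_{G/\ZZ}$), and $\widehat F\in D^*(G)$ because every $\widehat F$ satisfies (1)--(4), as established in this section; hence $r$ is a surjective $*$-homomorphism. Its kernel $J$ consists of the fields supported on $\Ga_3$; for such $\va$ the right-hand sides of (2)(c) and (2)(d) vanish, so these conditions together with norm continuity and decay say exactly that $\va\res{\Ga_3}$ is a norm-continuous, compact-operator-valued field on the locally compact Hausdorff space $\Ga_3\cong\R\times\R^*$ vanishing at infinity and towards $\Ga^0$; therefore $J\cong C_0(\Ga_3,\K(L^2(\R)))$, whose spectrum is $\Ga_3$. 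The short exact sequence
\[
0\longrightarrow J\longrightarrow D^*(G)\stackrel{r}{\longrightarrow}C^*(G/\ZZ)\longrightarrow0
\]
then gives $\widehat{D^*(G)}=\widehat J\sqcup\widehat{C^*(G/\ZZ)}=\Ga_3\sqcup\Ga^0$ with $\Ga_3$ open; since $\widehat{G/\ZZ}\cong\Ga^0$ (Remark~\ref{Grelaud}), as a set this is precisely $\widehat G$. Finally the topology matches because the accumulation of $\Ga_3$-points on $\Ga^0$ inside $\widehat{D^*(G)}$ is dictated by the gluing conditions, which reproduce the coadjoint-orbit convergence described in Propositions~\ref{limitGamma3toGamma2} and~\ref{omiszero}. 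The genuinely analytic step, and the one I expect to be the main obstacle, is the asymptotic multiplicativity of the gluing maps: it is clean for $\si_k^\om$ (compactness plus the collapse of the cut-off projection), whereas for the $\om=0$ gluing it is the part I would import wholesale from the $C^*(G/\ZZ)$ description of Remark~\ref{cstar GZZ} rather than redo by hand.
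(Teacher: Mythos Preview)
Your proposal is correct and follows the paper's architecture: closed $*$-subspace, multiplicative closure via asymptotic multiplicativity of the gluing maps $\si_k$, and the spectrum via the short exact sequence $0\to C_0(\Ga_3,\K)\to D^*(G)\to C^*(G/\ZZ)\to 0$ coming from restriction to $\Ga^0$.

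The one substantive difference concerns how asymptotic multiplicativity is obtained. For $\si_k^\om$ ($\om\ne0$) you prove it directly: the error on the $\R_+$ block is $-\va(\ta^+_{\ve\om,-\ve})(\id-M_{\{\ge|\la_k|R_k\}})\psi(\ta^+_{\ve\om,-\ve})M_{\{\ge|\la_k|R_k\}}$, and since $\va(\ta^+_{\ve\om,-\ve})$ is compact on $\Ga_2$ while the cut-off projection collapses strongly, this goes to $0$ in norm. The paper instead treats (2)(c) and (2)(d) uniformly by the Fourier-transform trick that you invoke only for $\om=0$: because $\si_k$ depends on $\va$ only through $\va|_L\subset\va|_{\Ga^0}$, and by Remark~\ref{cstar GZZ} one has $\va|_{\Ga^0}=\hat a|_{\Ga^0}$, $\psi|_{\Ga^0}=\hat a'|_{\Ga^0}$ for some $a,a'\in C^*(G)$, the two Theorems~\ref{lim rhklak is om} applied to $a$, $a'$ and $a\cdot a'$ together with $\pi_k(a\cdot a')=\pi_k(a)\pi_k(a')$ give $\si_k(\hat a)\si_k(\hat a')-\si_k(\widehat{a\cdot a'})\to0$ for free. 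Your direct argument is pleasant and explains the mechanism in the easy case, but it is unavailable for $\si_k^{0,\ve\om_k,\ve}$ precisely because the $\Ga_1$-pieces $\va(\ta^+_{\om_k,0})$, $\va(\ta^+_{0,0})$ are not compact, only compact modulo $\Ga_0$; hence your fallback to Remark~\ref{cstar GZZ} there is exactly right, and is in fact the paper's sole device.
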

 
\begin{proof} 
It is shown in Lemma~\ref{Dstar complete} that $D^*(G) $ is a closed subspace 
of $l^\iy(\widehat{G})$ and it is clear that $D^*(G)$ is involutive. Let us show 
that it is also a subalgebra. Let $\phi, \phi' $ be two elements 
of $D^*(G) $. By Remark~\ref{cstar GZZ}, there exist $a, a'\in C^*(G/\ZZ) $ such 
that $\phi\res{S_2}= \hat a$ and $\phi'\res{S_2}= \hat a'$. 
Therefore we only need to check if the conditions (2-b), (2-c) and (2-d) work for the  
product $\phi \circ \phi'$. Condition (2-b) is evident.
Now for any $\om\in{ \R^*} $ by Theorem~\ref{lim rhklak is om non0}, we have that
\begin{eqnarray*}\label{}
 \nn \limk\noop{\si_k^\om((\widehat{a\cdot a'})\res{L} )- \si_k^\om(\hat a\res{L})\circ \si_k^\om( \hat a'\res{L})}= 0,
 \end{eqnarray*} 
 and so 
\begin{eqnarray*}\label{}
 \nn \limk \noop{\phi\circ \phi'(\pi_{\rh_k,\la_k})- {\si_k^\om({\phi\circ \phi'\res{L}})}} 
&= &
 \limk\noop{\phi\circ \phi'(\pi_{\rh_k,\la_k})- \si_k^\om((\widehat {a\cdot a'}) \res{L})}\\
\nn  &= &
 \limk\noop{\phi(\pi_{\rh_k,\la_k})\circ \phi'(\pi_{\rh_k,\la_k})-\si_k^\om(\hat a \res{L})\circ \si_k^\om(\hat a' \res{L}) }\\
\nn  &= &
0.
 \end{eqnarray*}
Condition (2-d) follows in  a similar way thanks to Theorem \ref{lim rhklak is om0}.

Hence $D^*(G)$, being a closed $*$-subalgebra of $l^{\iy}(\hat G)$, is  a 
C*-algebra which contains the Fourier transform of  $C^*(G) $. It is clear that 
the spectrum of $D^*(G) $ contains $\widehat{G} $ since point evaluations are 
irreducible for $\widehat{C^*(G)} $. Take now $\pi\in\widehat{D^*(G)} $. Since there is a natural restriction map from $\widehat{C^*(G)}$ onto $\widehat{C^*(G/\ZZ)}$ by Remark \ref{cstar GZZ}, and the Fourier transfer defined on $C^*(G)$ (resp. on $C^*(G/\ZZ)$) is an isometric homomorphism onto $\mathcal{F}_G(C^*(G))$ which is contained in $D^*(G)$ (resp. $\mathcal{F}_{G/\ZZ}(C^*(G/\mathcal{Z}))$ which coincides with $D^*(G/\ZZ)$), thus, there is a restriction map, denoted by $R^0$, from $D^*(G)$ to $D^*(G/\ZZ) $ which is also surjective. Let $K^0= \{ \ph\in D^*(G); \ph(\ga)=0 \, \text{ for } 
\ga\in S_2\}$. It can be seen easily that $K^0$ is an ideal of $D^*(G)$. Note that $D^*(G/\ZZ)\simeq C^*(G/\ZZ)\simeq C^*(G)/K_{S_2}$.
It follows immediately from the conditions (1) and (2) that the 
ideal $K^0 $ of $D^*(G) $ is just the algebra $C_0(\GA_3,\K) $ of continuous 
mappings defined on the locally compact space $\GA_3 $ with values in the 
algebra $\K $ of compact operators on $\l2\R$. If $\pi(K^0)= (0) $, then  $\pi $ 
can be identified with an irreducible representation of $C^*(G/\ZZ) $, so there 
exists $\pi' \in S_2$ such that $\pi(\phi)= \phi(\pi')$ for $\phi\in D^*(G) $. 
If $\pi(K^0)\ne (0) $, then $\pi $ defines an irreducible  representation of the 
algebra $C_0(\GA_3,\K) $ and hence there exists $\pi_{\rh,\la}\in \Sig_3 $ such 
that $\pi(\phi)= \phi(\pi_{\rh,\la})$ for $\phi\in K^0 $, and finally $\pi $ is 
the evaluation at $\pi_{\rh,\la} $ for every $\phi\in D^*(G) $.

It is clear that $D^*(G) $ is postliminal, since every (non-trivial)
irreducible representation $(\pi,\H_{\pi}) $ of $D^*(G) $ induces compact 
operators on $\H_{\pi}$ (conditions (2-b), (3-b), (3-e)).
\end{proof}

Now we have our main theorem which characterises the C*-algebra $C^*(G)$ of Boidol's group by the above descriptions of the Fourier transform of $C^*(G)$ onto $D^*(G)$.

\begin{theorem}\label{Cster G identified}
The Fourier transform defined in (\ref{fourrier def}) is an isomorphism of the C*-algebra of Boidol's group $G $ onto the C*-algebra $D^*(G) $. 
\end{theorem}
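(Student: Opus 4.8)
The plan is to prove that $\F$, which the introduction records as an injective $*$-homomorphism of $C^*(G)$ into $l^\iy(\wh G)$, maps $C^*(G)$ \emph{onto} $D^*(G)$. I would split this into the containment $\F(C^*(G))\subseteq D^*(G)$ and the surjectivity $\F(C^*(G))=D^*(G)$, the latter being extracted from the central ideal structure of $C^*(G)$.

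For the containment I would check each clause of Definition~\ref{def:DstarG} against an already-proved statement. Clause~(1) and the norm-continuity parts of~(2a) and~(3a) are Proposition~\ref{normcontinuity}. The compactness in~(2b) and~(3b) is Remark~\ref{closed}, since the orbits in $\GA_3$ and $\GA_2$ are closed and the associated representations land in $\K$. The boundary conditions~(2c) and~(2d) are precisely the two convergence statements of Theorem~\ref{lim rhklak is om}, for $\om\ne0$ and $\om=0$ respectively. All of condition~(3) concerns only $\hat F\res{\GA^0}$; since $\hat F\res{\GA^0}=\widehat{P_{G/\ZZ}F}\res{\GA^0}$ with $P_{G/\ZZ}F\in C^*(G/\ZZ)$, and the fields so obtained form exactly the Fourier transform of $C^*(G/\ZZ)$ by Remark~\ref{cstar GZZ} (equivalently \cite{Lin-Lud}), condition~(3) holds. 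Finally, condition~(4) is automatic because $\F$ is a $*$-homomorphism: $(\hat F)^*=\widehat{F^*}$ with $F^*\in C^*(G)$.

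For surjectivity, take $\va\in D^*(G)$. By Remark~\ref{cstar GZZ} there is $F_\va\in C^*(G)$ with $\va\res{\GA^0}=\widehat{F_\va}\res{\GA^0}$, so $\ps:=\va-\widehat{F_\va}\in D^*(G)$ vanishes on $\GA^0$. Hence $\ps$ lies in the kernel $K^0$ of the restriction map $D^*(G)\to D^*(G/\ZZ)$, which Lemma~\ref{Dstar is cstar} identifies with $C_\iy(\GA_3,\K)$. I would then invoke the central ideal $J:=\ker(P_{G/\ZZ})$ of $C^*(G)$, whose spectrum is $\GA_3$, namely the representations nontrivial on $\ZZ$, i.e. those with $\la\ne0$. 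Because every $\pi_{\rh,\la}$, $(\rh,\la)\in\GA_3$, acts on the single Hilbert space $\l2\R$ onto $\K(\l2\R)$ and depends norm-continuously on $(\rh,\la)$ through the kernel formula~(\ref{kerfunc rh la}), the Fourier transform identifies $J$ with $C_\iy(\GA_3,\K)=K^0$. Thus there is $G_\ps\in J$ with $\widehat{G_\ps}=\ps$, and $\va=\widehat{F_\va+G_\ps}\in\F(C^*(G))$. Together with injectivity, this shows that $\F$ is an isomorphism of $C^*(G)$ onto $D^*(G)$.

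The main obstacle is the identification $\F(J)=C_\iy(\GA_3,\K)$: one must show that the central ideal is realised as \emph{all} continuous compact-operator-valued fields over $\GA_3$, equivalently that the operator-field bundle over $\GA_3$ is trivialised by the single fibre $\l2\R$. This rests on the explicit, norm-continuous kernel description of $\pi_{\rh,\la}$ together with the local compactness and Hausdorffness of $\GA_3$ (Proposition~\ref{toporbitspace}); once it is in place, the two-layer reduction through $G/\ZZ$ closes the surjectivity.
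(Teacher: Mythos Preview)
Your containment argument is fine and more detailed than the paper's (the paper leaves $\F(C^*(G))\subseteq D^*(G)$ implicit, relying on the reader to assemble Proposition~\ref{normcontinuity}, Remark~\ref{closed}, the two Theorems~\ref{lim rhklak is om}, and Remark~\ref{cstar GZZ} exactly as you do).

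For surjectivity, however, the paper takes a genuinely different and shorter route. Once Lemma~\ref{Dstar is cstar} establishes that $D^*(G)$ is a C*-algebra whose spectrum is precisely $\widehat{G}$, the paper simply applies the Stone--Weierstrass theorem for C*-algebras (Dixmier~\cite{Di}) to the inclusion $\F(C^*(G))\subset D^*(G)$: since no point evaluation in $\widehat{G}$ annihilates $\F(C^*(G))$, the subalgebra must be all of $D^*(G)$. This bypasses entirely the two-layer extension argument you propose.

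Your approach---splitting off the quotient $C^*(G/\ZZ)$ via Remark~\ref{cstar GZZ} and then matching the remaining ideal $J=\ker P_{G/\ZZ}$ with $K^0=C_\iy(\GA_3,\K)$---is correct in outline and more constructive, but the step you flag as the ``main obstacle'' is real: knowing that each $\pi_{\rh,\la}$ maps onto $\K(\l2\R)$ and varies norm-continuously gives only $\F(J)\subseteq C_\iy(\GA_3,\K)$, not equality. To close that gap you would still need either a Stone--Weierstrass argument (now applied to the ideal, using that $\F(J)$ is not annihilated by any point of $\GA_3$) or an explicit verification that the associated field of elementary C*-algebras over $\GA_3$ is trivial. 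Either way you end up invoking essentially the same abstract tool the paper uses, just one level down; the paper's choice to apply it once at the top is cleaner, while your decomposition makes the extension structure $0\to C_\iy(\GA_3,\K)\to C^*(G)\to C^*(G/\ZZ)\to 0$ explicit.
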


\begin{proof} Using Lemma~\ref{Dstar is cstar}, the Stone-Weierstrass theorem for C*-algebras (see \cite[11.1.8]{Di}) tells us  that the subalgebra $\widehat{C^*(G)} $ of $D^*(G) $ is equal to $D^*(G) $. 
\end{proof}


\begin{thebibliography}{1000}
\bibitem{Be-Be-Lu} I. Beltita, D. Beltita, J. Ludwig, 
Fourier Transforms of C*-Algebras of Nilpotent Lie Groups, 
{\em Int. Math. Res. Not. IMRN} {3} (2017), 677-714.

\bibitem{Boi} J. Boidol, $ *$-regularity of exponential Lie groups. {\em Invent. 
Math}. {56} (1980), no. 3, 231-238. 

\bibitem{Boi2} J. Boidol, On a regularity condition for group algebras of non-abelian locally compact groups. Harmonic analysis, Iraklion 1978 (Proc. Conf., Univ. Crete, Iraklion, 1978), pp. 16-21, Lecture Notes in Math., 781, Springer, Berlin, 1980. 

\bibitem{Boi1} J. Boidol, $* $-regularity of some classes of solvable groups. 
{\em Math. Ann}. {261} (1982), no. 4, 477-481. 

\bibitem{Fe} J. M. G. Fell, The structure of algebras of operator fields, {\em 
Acta Math.}  {106}  (1961),  233-280.



\bibitem {ILL1} J. Inoue, Y.-F. Lin, J. Ludwig, The solvable Lie group 
$N_{6,28}$: an example of an almost $C_0(\K)$-C*-algebra. {\em Adv. Math.} {272} (2015), 
252-307. 
   
\bibitem {ILL2} J. Inoue, Y.-F. Lin, J. Ludwig, A class of almost $C_0(\K)$-C*-algebras. {\em J. Math. Soc. Japan} {68} (2016), no. 1, 
71-89. 

\bibitem {Di} J. Dixmier, C*-algebras. Translated from the French
by Francis Jellett. North-Holland Mathematical Library, Vol. 15.
North-Holland Publishing Co., Amsterdam-New York-Oxford, 1977.
xiii+492 pp.

\bibitem{Lee1} R.-Y. Lee, On the C* Algebras of Operator Fields {\em Indiana
Univ. Math. J.} {26} (1977) no. 2, 351-372.

\bibitem {Lin-Lud} Y.-F. Lin, J. Ludwig, The C*-algebra of $ax+b$-like groups,
\emph{J. Funct. Anal.} {259} (2010), 104-130.

\bibitem{Lep-Lud} H. Leptin, J. Ludwig, Unitary representation theory  
of exponential Lie groups, {\em De Gruyter Expositions in Mathematics} 18, 1994.

\bibitem {Lu-Tu} J. Ludwig, L. Turowska, The C*-algebras of the
Heisenberg Group and of thread-like Lie groups, {\em Math. Z.} {268} 
(2011), no. 3-4, 897-930.

\bibitem{Lu-Re}  J. Ludwig, H. Regeiba, The C*-algebra of 
the Heisenberg motion groups $\T^n\ltimes \H_n$, 
{\em Complex Anal. Operator Theory}, 13 (2019), no. 8, 3943-3978.


\bibitem{Reiter} H. Reiter, J. D. Stegeman, Classical Harmonic analysis 
and locally compact groups, {\em Clarendon Press}, 2000.

\end{thebibliography}
\end{document}